\documentclass{amsart} 
\usepackage{graphicx,amsmath,amscd,amssymb,tensor,xfrac}
\usepackage[nohug]{diagrams}

\newtheorem*{thm:main}{Theorem \ref{brane local form}}
\newtheorem*{thm:mainlem}{Main Lemma \ref{main lemma}}
\usepackage{tikz,units}
\usepackage{mathtools}
\usepackage{hyperref}

\vfuzz2pt 
\hfuzz2pt 
\newtheorem{thm}{Theorem}[section]
\newtheorem{cor}[thm]{Corollary}
\newtheorem{lem}[thm]{Lemma}
\newtheorem{prop}[thm]{Proposition}
\newtheorem*{thm A}{Theorem A}
\newtheorem*{thm B}{Theorem B}
\newtheorem*{lem:appendix}{Lemma \ref{appendix lemma}}
\theoremstyle{definition}
\newtheorem{defn}[thm]{Definition}

\newtheorem{example}[thm]{Example}

\theoremstyle{remark}
\newtheorem{rem}[thm]{Remark}
\numberwithin{equation}{section}

\newcommand{\longversion}[1]{}




\newcommand{\bb}[1]{\mathbb{#1}}

\newcommand{\II}{\mathbb I}
\newcommand{\JJ}{\mathbb J}
\newcommand{\R}{\mathbb R}
\newcommand{\C}{\mathbb C} 

\newcommand{\Z}{\mathbb Z}

\newcommand{\Tc}{\mathbb T} 
\renewcommand{\L}{\mathcal{L}}

\newcommand{\T}{\textsf{T}} 

\newcommand{\tens}{\otimes} 
\newcommand{\dsum}{\oplus}

\newcommand{\iso}{\cong} 
\newcommand{\isoto}{\overset\sim\to}

\renewcommand{\phi}{\varphi} 
\renewcommand{\to}{\longrightarrow}

\newcommand{\oto}[1]{\overset{#1}\to}

\newcommand{\into}{\hookrightarrow}
 
\renewcommand{\mapsto}{\longmapsto}


\newcommand{\comp}{\circ}

\newcommand{\Ann}{\textrm{Ann}} 
 
\newcommand{\del}{\partial}

\newcommand{\pair}[1]{\left\langle #1 \right\rangle}

\renewcommand{\^}{\wedge}

\renewcommand{\epsilon}{\varepsilon}

\newcommand{\hide}[1]{} 

\newcommand{\Id}{\textrm{Id}}

\renewcommand{\Im}{\textrm{Im}}

\setcounter{tocdepth}{1}


\newdiagramgrid{house}%
{}{.5,.5,1,1,1,1,1}



\begin{document}

\title{The local structure of generalized complex branes}

\author{Michael Bailey}

\begin{abstract}
We show (modulo a parity condition) that, a generalized complex brane in a generalized complex manifold is locally equivalent to a holomorphic coisotropic submanifold of a holomorphic Poisson structure, with higher-rank branes corresponding to holomorphic Poisson modules. We describe (but do not prove here) the global version of this holomorphicity result. Finally, we use the ``local holomorphic gauges'' to give examples, in the Hopf surface with nonstandard generalized complex structure, of branes which are neither Lagrangian nor complex. 
\end{abstract}

\maketitle

\tableofcontents

\section{Introduction}

Kapustin and Orlov \cite{KapustinOrlov2003} proposed, for string theory reasons, that the ``branes'' of the A-model (or generalized Fukaya category) of a symplectic manifold should include, not only Lagrangians with flat vector bundles, but also ``coistropic A-branes,'' i.e., coisotropic submanifolds equipped with vector bundles and certain additional data.  Later, Gualtieri \cite{Gualtieri2011} showed that these branes have their natural definition as subobjects in \emph{generalized complex geometry}, of which symplectic geometry is a special case.

In \cite{bailey2013}, we showed that a generalized complex structure is locally equivalent to a product of a holomorphic Poisson structure with a symplectic structure (and, modulo a parity condition, we can say that it is locally equivalent just to a holomorphic Poisson structure). This yields natural examples of branes via \emph{holomorphic coisotropic submanifolds} and their holomorphic modules. In this paper, we prove the local converse, namely, generalized complex branes are  (again, modulo a parity condition) \emph{locally equivalent} to holomorphic Poisson modules over holomorphic coisotropics.

The ``rank-0'' case, which deals only with the submanifold and its ``generalized tangent bundle,'' is covered in Theorem \ref{brane local form}. The higher-rank case, which considers also the data of a vector bundle over the submanifold, equipped with a certain connection, is covered in Corollary \ref{higher rank local form}.

The holomorphic Poisson structure to which a given generalized complex structure is equivalent is not unique in general---there is a class of gauge transforms known as \emph{$B$-field transforms} which can relate holomorphic Poisson structures with different underlying complex structures. Given a generalized complex structure and such a choice of ``holomorphic gauge'', we do not expect a particular given brane to be a complex submanifold. The choice of gauge must be adapted to the brane.

For example, if $(M,I,\Omega)$ is a holomorphic symplectic manifold, its generalized complex branes correspond to real-Lagrangian submanifolds of the corresponding real symplectic manifold $(M,\omega=\Im(\Omega))$. Most of these Lagrangians will not be $I$-complex submanifolds. However, given any real-Lagrangian $S \subset M$ and a point $p \in S$, there exists a complex structure in a neighbourhood of $p$ such that $S$ is a complex submanifold and $\omega$ is holomorphic (this follows easily from the Darboux normal form).

\subsection{Morita equivalence, stacks, and the global picture}\label{upcoming paper section}

This paper is strictly about the local holomorphic structure of generalized complex branes. In an upcoming paper, we describe how branes may be seen as globally holomorphic in a weak sense.

In \cite{BG2016}, Bailey and Gualtieri show that---given an integrability condition---a generalized complex structure is equivalent to a real Poisson structure with a compatible complex structure defined only on the associated \emph{stack}. In other words, the generalized complex structure is determined by the underlying real Poisson manifold and a Morita equivalence to a holomorphic Poisson manifold (which is unique only up to holomorphic Morita equivalence).

There is a global version of the results of this paper: namely, that a generalized complex brane is just a coisotropic which holomorphic \emph{as a sub-object of the associated stack}. To be more concrete: a brane in a generalized complex manifold $(M,\II)$ may be represented by a coisotropic submanifold $S$ in the underlying Poisson manifold $(M,P)$, together with a Morita equivalence to a holomorphic Poisson manifold $(X,\pi)$ (a ``holomorphic atlas'', which determines the generalized complex structure), together with a Lagrangian sub-Morita-equivalence between $S$ and a holomorphic coistropic $\Sigma \subset X$.

\section{Generalized complex geometry and branes}

We recall some of the basics of generalized complex geometry.  The canonical reference is \cite{Gualtieri2011}.

Generalized complex structures are defined on \emph{exact Courant algebroids}. We skip the axiomatic definition of an exact Courant algebroid, since it is in any case isomorphic to one of the following concrete models:
\begin{defn}
Let $M$ be a manifold, and let $H$ be a closed 3-form on $M$. The \emph{standard Courant algebroid} on $M$ with \emph{twist $H$} is the vector bundle $\Tc M := TM \dsum T^*M$ equipped with a bracket $[\cdot,\cdot] : C^\infty(\Tc M \dsum \Tc M) \to C^\infty(\Tc M)$ defined as follows: for $X,Y \in C^\infty(T M)$ and $\xi,\eta \in C^\infty(\T^*M)$,
\begin{align}
[X+\xi,Y+\eta] = [X,Y]_{Lie} + \L_X \eta - \iota_Y \xi + \iota_x \iota_Y H.
\end{align}
Also part of the structure of a Courant algebroid is the \emph{anchor map} to $TM$ (in this case, the canonical projection $TM \dsum T^*M \to TM$) and a nondegenerate, symmetric pairing (in this case, the canonical symmetric $\pair{\cdot,\cdot} : TM \dsum TM \to \R$).
\end{defn}

\begin{defn}
A \emph{generalized complex structure} on $\Tc M$ is a complex structure $\II : \Tc M \to \Tc M, \II^2 = -1$ which is orthogonal with respect to the pairing and whose $+i$--eigenbundle is involutive with respect to the bracket.
\end{defn}

The two basic examples are, for a symplectic structure $\omega : TM \to T^*M$ and a complex structure $I : TM \to TM$,
\begin{align}\label{basic gc examples}
\II_\omega =
\begin{pmatrix}
0 & \omega^{-1} \\
-\omega & 0
\end{pmatrix}
\quad \textnormal{and} \quad
\II_I =
\begin{pmatrix}
-I & 0 \\
0 & I^*
\end{pmatrix}.
\end{align}

An isomorphism of Courant algebroids is an isomorphism of the underlying vector bundles which respects the bracket and the pairing (and, consequently, the anchor map).  The Courant isomorphisms, or \emph{generalized diffeomorphisms}, are generated by diffeomorphisms---acting by pushforward and inverse pullback---and the \emph{$B$-transforms}: for $B$ a closed 2-form, the map
\begin{align}
e^B &: \Tc M \to \Tc M \notag \\
&: X + \xi \mapsto X + \iota_X B + \xi, \notag \\
\textnormal{i.e.,} \quad e^B &=
\begin{pmatrix}
1 & 0 \\
B & 1
\end{pmatrix}.
\end{align}
is a generalized diffeomorphism.  If $B$ is not closed, the $B$-transform is a generalized diffeomorphism to a Courant algebroid with twist $H + dB$.

\begin{rem}
A $B$-transform with $dB + H=0$ determines (and is equivalent to) a new choice of isotropic, involutive splitting of the sequence $T^*M \into \Tc M \to TM$, i.e.,
\begin{align}
s : TM \to e^B(TM) \subset \Tc M.
\end{align}
\end{rem}

\begin{example}[Holomorphic Poisson]
We give one more fundamental example of a generalized complex structure.  If $I$ is a complex structure and $P : T^*M \to TM$ is a real Poisson bivector for which $\pi := -\tfrac{1}{4}(IP + iP) = -\tfrac{i}{2}P^{2,0}$ is holomorphic with respect to $I$, then
\begin{align}\label{gc hol poisson}
\II_{I,P} := \begin{pmatrix}
-I & P \\
0 & I^*
\end{pmatrix}
\end{align}
is generalized complex.  Conversely, if a generalized complex structure has the matrix form \eqref{gc hol poisson}, then $I$ is a complex structure and $\pi := -\tfrac{i}{2}P^{2,0}$ is holomorphic Poisson. In this case, $\II_{I,P}$ has $+i$-eigenbundle
\begin{align}
L = T_{0,1} \dsum (1+\pi) (T^*_{1,0}).
\end{align}
\end{example}

\begin{rem}
For \emph{any} generalized complex structure $\II$, the ``upper right'' (i.e., $T^* \to T$) component defines a real Poisson structure \cite{Gualtieri2011}; unlike the other matrix entries, this is invariant under conjugation by a $B$-transform, and so it is indeed an invariant of the generalized complex structure. (On an abstract Courant algebroid, its definition is intrinsic.)

However, if $\II$ does not have the form \eqref{gc hol poisson}, i.e., if the lower-left ($T \to T^*)$ component doesn't vanish, we don't expect the diagonal components to define integrable complex structures or even to square to $-1$.
\end{rem}

In \cite{bailey2013}, we showed:
\begin{thm}\label{gc local} 
Any generalized complex structure is locally equivalent (up to $B$-transform) to a product of a symplectic structure (as in \eqref{basic gc examples}) with a holomorphic Poisson structure (as in \eqref{gc hol poisson}).  Moreover, if the corresponding real Poisson structure has rank $0 \mod 4$, then in fact the generalized complex structure is locally equivalent to just a holomorphic Poisson structure.
\end{thm}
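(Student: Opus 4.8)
The plan is to organize the argument around the one piece of data that is genuinely invariant: the real Poisson structure $P$ given by the upper-right $(T^*\to T)$ block of $\II$, which the remark above notes is unchanged by $B$-transforms. Fix a point $p$. The first step is to strip off the symplectic directions by applying Weinstein's splitting theorem to $P$: in suitable local coordinates $(M,P) \cong (S,\omega^{-1})\x(N,P_N)$ with $S$ symplectic and $P_N$ vanishing at the image of $p$. I would then check that this product decomposition lifts to the generalized complex structure, so that up to a $B$-transform $\II$ splits as $\II_\omega$ on the $S$-factor times a transverse structure $\II_N$ whose Poisson part vanishes at $p$. This isolates the symplectic factor of \eqref{basic gc examples} and reduces everything to the case $P_p=0$, i.e.\ to a point of pure complex type.

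At such a point the diagonal block of $\II$ is an almost complex structure $I$ on $TM$, and after a $B$-transform killing the lower-left $(T\to T^*)$ block \emph{at $p$} the structure agrees with the holomorphic-Poisson model \eqref{gc hol poisson} at the point $p$. The content of the theorem is to upgrade this pointwise match to an honest neighbourhood, which I would do in two stages. First, a \emph{formal} stage: working order by order in the Taylor expansion at $p$, show that the available gauge freedom --- infinitesimal $B$-transforms and diffeomorphisms, i.e.\ the action of the deformation complex $(\wedge^\bullet L^*, d_L)$ of the $+i$-eigenbundle $L$ --- can cancel the lower-left block and enforce holomorphicity of $P$ to all orders. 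Concretely, at each order the obstruction lives in a Dolbeault-type Lie-algebroid cohomology of the complex model, and one checks these groups vanish in the relevant degrees, so that the $\infty$-jet of $\II$ can be brought into holomorphic-Poisson form. I expect this stage to be a careful but essentially algebraic bookkeeping computation.

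The main obstacle is the \emph{analytic} stage: passing from the formal normal form to a genuine local equivalence. One must solve the nonlinear first-order PDE asserting that the gauge-transformed structure is \emph{exactly} holomorphic Poisson (vanishing of the lower-left block together with $\bar\partial_I \pi = 0$). Its linearization is an elliptic $\bar\partial$-type operator along the complex directions, but its approximate inverse loses derivatives --- precisely because the complex structure $I$ along which one differentiates is itself among the unknowns being solved for. This derivative loss rules out the ordinary implicit function theorem. The remedy, and the technical heart of the proof, is a Nash--Moser iteration: introduce smoothing operators on the appropriate tame Fr\'echet spaces of sections, establish tame estimates for the linearized operator and an approximate right inverse built from a $\bar\partial$-homotopy, and prove that the resulting Newton scheme converges. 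Controlling these tame estimates for the $\bar\partial$-homotopy in the presence of a varying almost complex structure is where I anticipate the real difficulty.

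Finally, for the strengthened conclusion under the parity hypothesis, I would account for the symplectic factor split off in the first step. A holomorphic Poisson structure, viewed through its underlying real bivector, always has real Poisson rank divisible by $4$, since its symplectic leaves are holomorphic-symplectic and hence of real dimension $4s$. The real symplectic factor split off above has real rank $\dim S$, which is precisely the rank of $P$ at $p$ (the transverse part $P_N$ vanishing there). When this rank is $\equiv 0 \bmod 4$ one can pair the real symplectic directions with a compatible complex structure to realise them as a holomorphic-symplectic --- hence holomorphic Poisson --- factor, folding $\II_\omega$ into the normal form; when it is $\equiv 2 \bmod 4$ an irreducible real symplectic $\R^2$ survives that no complex structure renders holomorphic, which is exactly the obstruction recorded by the condition rank $\equiv 0 \bmod 4$.
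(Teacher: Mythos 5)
Your overall strategy is essentially the paper's (that is, the strategy of \cite{bailey2013}, which the present paper only reviews in Section \ref{the algorithm}): split off the symplectic directions, realize the remaining structure as a Maurer--Cartan deformation $\epsilon = \epsilon_{2,0}+\epsilon_{1,1}+\epsilon_{0,2}$ of a fixed complex structure on a polydisc, kill $\epsilon_{1,1}$ and $\epsilon_{0,2}$ by a Nash--Moser iteration built from a $\bar\del$-homotopy operator, and handle the parity clause by absorbing the rank-$4k$ symplectic factor into a holomorphic symplectic (hence holomorphic Poisson) one --- your necessity/sufficiency argument for the $0 \bmod 4$ condition is the right one. Two deviations are worth flagging but are not fatal. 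First, your ``formal, order-by-order'' stage is absent from the actual proof and is redundant: the iteration $\epsilon^{n+1}=\phi_{V(\epsilon^n)}\cdot\epsilon^n$ with $V(\epsilon)=P([\epsilon_{2,0},P\epsilon_{0,2}]-\epsilon_{1,1}-\epsilon_{0,2})$ acts directly on the honest tensors, with the background complex structure of $\C^n$ held \emph{fixed} throughout --- so the complex structure is not among the unknowns, contrary to your diagnosis of why the implicit function theorem fails; the derivative loss comes from composing with the flow and from the homotopy operator estimates. Second, ``checking that the Weinstein splitting lifts'' to the generalized complex structure is itself a theorem (the generalized Darboux splitting of Abouzaid--Boyarchenko), not a routine verification, though it is a legitimate citable input.

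The one genuine gap is the smallness of the initial data. A Nash--Moser scheme converges only from a sufficiently small seed, and you never explain how to make $\epsilon$ small. Restricting to a small polydisc does not suffice: the three components of $\epsilon$ have different co- and contravariant degrees and therefore scale by different powers under dilation. In particular, since $\epsilon_{2,0}$ in general vanishes only to first order at a point of complex type, zooming in by a factor $t$ scales $\|\epsilon_{2,0}\|$ by roughly $t^{-1}$, which blows up as $t \to 0$. The necessary extra idea (Section 7 of \cite{bailey2013}, recalled in Part 2 of the proof of Theorem \ref{brane local form} in this paper) is to combine the zoom with the anisotropic rescaling $\lambda_s$ of $T^*M$ inside $\Tc M$ --- an operation which is not a generalized diffeomorphism but which preserves generalized complex structures --- so as to trade the size of $\epsilon_{2,0}$ against that of $\epsilon_{0,2}$. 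Without this (or an equivalent device) the analytic stage of your argument cannot be started.
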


\begin{rem}
A generalized complex structure, $\II$, does not in general determine a complex structure.  The theorem says that, near any point, there is a choice of complex structure $I$ and a $B$-field such that $e^B \II e^{-B}$ has the form \eqref{gc hol poisson}.  In fact, $I$ is not unique and may not exist globally.  Different ``holomorphic gauges'' are related, in the holomorphic Poisson category, by \emph{Morita equivalence} as explained in \cite{BG2016}. The way that branes appear in these Morita equivalences, and thus the global holomorphic picture of branes, is the subject of the upcoming paper described in Section \ref{upcoming paper section}.
\end{rem}

\subsection{Generalized complex branes}
Suppose $\Tc M = TM \dsum T^*M$ is the standard Courant algebroid on $M$ with twist $H$, and suppose that $\iota : S \into M$ is a submanifold.

A \emph{Courant trivialization} along $S$ is a 2-form $F$ on $S$ such that $dF = \iota^*(H)$. Note that $F$ is equivalent to a choice of flat, isotropic splitting, $s : TS \to e^F(TS) \subset \Tc S$, of the ``pullback Courant algebroid'' $\Tc S = TS \dsum T^*S$ with twist $\iota^*(H)$.

A Courant trivialization along $S$ determines a \emph{generalized tangent bundle}
\begin{align}
\tau_S &= s(TS) + N^*S \subset \Tc M|_S \notag \\
&= \{X + \xi \in \Tc M|_S \;|\; \iota^*(\xi) = F(X) \}
\end{align}
\begin{rem}\label{generalized tangent is locally standard}
As a vector bundle, $\tau_S \iso TS \dsum N^*S$. Locally about any point in $S$, there is a choice of splitting of $\Tc M$ in which $\tau_S$ is identified with $TS \dsum N^*S$; namely, extend $F$ to a 2-form $\tilde{F}$ on a neighbourhood in $M$ such that $d\tilde{F} = H$.
\end{rem}

\begin{defn}
If $\II$ is a generalized complex structure on $\Tc M$, then a \emph{rank-$0$ generalized complex brane} supported on $S \subset M$ is a choice of Courant trivialization along $S$---and thus a choice of generalized tangent bundle $\tau_S$---such that $\II \tau_S = \tau_S$.
\end{defn}

$\tau_S \tens \C$ decomposes into $+i$ and $-i$-eigenbundles, which we call $\ell$ and $\bar\ell$ respectively.  $\ell$ (and $\bar\ell$) inherit brackets from $\Tc M$ (via arbitrary extension of sections), making them complex Lie algebroids over $S$.

\begin{defn}
A \emph{generalized complex brane} supported on $S$ consists of the data of a rank-$0$ brane, along with a complex vector bundle $V \to S$ and a flat $\ell$-connection
\begin{align}
\nabla : C^\infty(\ell \dsum V) \to C^\infty(V).
\end{align}
\end{defn}

We assume the reader is familiar with Lie algebroids and their connections, though their general theory does not play a big role in this paper. For a comprehensive survey, see \cite{mackenzie_2005}.

\begin{example}\label{lagrangian example}
	Let $(M,\II)$ be a generalized complex manifold of symplectic type, i.e., whose real Poisson structure is invertible to some symplectic form $\omega$.  The prototypical example is $\II_\omega$ in \eqref{basic gc examples}, but in general $\II$ could be a $B$-transform of $\II_\omega$. Then any Lagrangian submanifold $S \subset M$ is a generalized complex brane. In the example $\II_\omega$, $\tau_S = TS \dsum N^*S$, but in general $\tau_S$ is fixed uniquely by $\tau_S = \II(N^*S) \dsum N^*S$.
	
	In this case, $\ell = \tau_S^{1,0} \iso \C \tens TS$ as Lie algebroids, so higher-rank branes are just flat vector bundles on $S$.
\end{example}

\begin{rem}
Generalized complex branes are, in particular, coisotropic submanifolds. Namely, if $M$ is a generalized complex manifold with corresponding real Poisson structure $P:T^*M \to TM$, and if $S \subset M$ is the support of a generalized complex brane, then $P(N^*S) \subset TS$.

The distribution $P(N^*S)$ induces a foliation, $F_S$, on $S$. ($TF_S = TS$ in the Lagrangian example.) Kapustin and Orlov first observed in the physics context that, transverse to $F_S$ in $S$, there should be a holomorphic structure. Indeed, $P(N^*S) \dsum N^*S$ is an $\II$--invariant subbundle of $\tau_S$, so $\II$ determines a complex structure on
\begin{align*}
NF_S \iso \frac{\tau_S}{P(N^*S) \dsum N^*S}.
\end{align*}
\end{rem}

\begin{example}\label{hol coiso example}
Let $(M,I,\pi)$ be a holomorphic Poisson manifold, and $\II_{I,P}$ the corresponding generalized complex structure, as in \eqref{gc hol poisson}. If $S \subset M$ is a holomorphic, coisotropic submanifold, then $S$ supports a generalized complex brane with $\tau_S = TS \dsum N^*S$. In this case,
\begin{align}\label{coisotropic ell}
\ell = \tau_S^{1,0} = T_{0,1}S \dsum (1+\pi) (N^*_{1,0} S).
\end{align}
Given a vector bundle $V \to S$, the first summand of \eqref{coisotropic ell} acts on $V$, making $V$ a holomorphic vector bundle, and the second summand makes it a \emph{holomorphic Poisson module} over $S$ (see Corollary \ref{higher rank local form} or \cite{Gualtieri2010} for the argument, and see \cite{fernandes2000} for an overview of Poisson connections).

Unlike the Lagrangian example, in this case the generalized tangent bundle of $S$ is not uniquely determined by $\II$ and $S$. For example, given the trivial Poisson structure $\pi=0$, $S$ is just a complex submanifold, and $\tau_S$ may be transformed by any closed $(2,0) + (0,2)$--form for which $S$ is isotropic.
\end{example}

\subsection{Generalized flow}

A \emph{generalized vector field}, i.e., a section of $\Tc M$, generates a time-dependent family of generalized diffeomorphisms, in analogy with vector fields generating families of diffeomorphisms.  If $X + \xi$ is the generalized vector field, with $X \in C^\infty(TM)$ and $\xi \in \Omega^1(M)$, then the diffeomorphism part, $\phi_t$, of the family is just the exponential of $tX$.  The $B$-transform part is
\begin{align}\label{flow B field}
B_t = \int_{s=0}^t \phi_s^*(d\xi) ds.
\end{align}

\section{Statement of the theorem}\label{local structure section}

Most of this paper is devoted to proving the following local structure theorem for rank-0 branes:

\begin{thm}\label{brane local form}
	If $S$ is an $\R$-even-dimensional, rank-0 generalized complex brane in $(M,\II)$ with generalized tangent bundle $\tau \subset \Tc M|_S$, then any $p \in S$ has a neighbourhood in $M$ on which the data $\II$, $S$ and $\tau$ are equivalent, under a $B$-transform, to a holomorphic Poisson structure $\pi$ for some complex structure $I$, with $S$ now a complex, coisotropic submanifold with generalized tangent bundle $TS \dsum N^*S$.
\end{thm}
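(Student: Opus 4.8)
The plan is to bootstrap from the generalized-complex normal form of Theorem~\ref{gc local} and then adapt the resulting holomorphic gauge to the brane. First I would apply Theorem~\ref{gc local} on a neighbourhood of $p$ to obtain, after a generalized diffeomorphism (a diffeomorphism composed with a $B$-transform), a splitting $M\cong M_\omega\times M_{hol}$ under which $\II$ becomes a product $\II_\omega\times\II_{I_0,\pi_0}$ of a symplectic structure as in \eqref{basic gc examples} with a holomorphic Poisson structure as in \eqref{gc hol poisson}. The theorem is then reduced to two subproblems: (i) straighten the brane $(S,\tau)$ so that it respects this product; and (ii) replace the symplectic factor by a holomorphic Poisson structure in a way compatible with its part of the brane.

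The crux, and what I expect to be the main obstacle, is subproblem (i): the gauge produced by Theorem~\ref{gc local} is chosen to normalize $\II$ and is blind to $S$, so a priori $\tau$ need not respect the product decomposition. I would attack this by a Moser-type argument carried out in the generalized setting, using the generalized flow whose $B$-field part is given by \eqref{flow B field}. The two ingredients that make such an interpolation possible are the involutivity of the $+i$-eigenbundle of $\II$ (generalized-complex integrability) and the involutivity of $\tau$ as a maximal isotropic along $S$ (which is precisely the Courant-trivialization condition $dF=\iota^*H$); combined with the compatibility $\II\tau=\tau$, these let one deform the given $\tau$ to a product generalized tangent bundle while holding $\II$ in its product form. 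Concretely I would first straighten the support: the characteristic distribution $P(N^*S)$ is tangent to a foliation whose leaves are the symplectic directions and transverse to which $\II$ supplies a complex structure, so Frobenius lets me arrange $S=S_\omega\times S_{hol}$ with $S_\omega\subset M_\omega$ Lagrangian and $S_{hol}\subset M_{hol}$ an $I_0$-complex coisotropic; I would then correct the trivialization $F$ to its product form by absorbing the residual closed $2$-form into a further $B$-transform. This simultaneous straightening of the two involutive structures is presumably the content of the Main Lemma.

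Subproblem (ii) is where the parity hypothesis enters. Once the brane splits, $S_\omega$ is a brane for the symplectic factor, hence a Lagrangian (Example~\ref{lagrangian example}), so $\dim_\R S_\omega=\tfrac12\dim_\R M_\omega$. Since $S_{hol}$ is a complex submanifold its real dimension is even, and therefore $\dim_\R S$ is even if and only if $\dim_\R S_\omega$ is even, i.e.\ $\dim_\R M_\omega\equiv 0\bmod 4$. Under this condition I would holomorphic-ize the symplectic factor exactly as in the holomorphic-symplectic discussion of the introduction: a Darboux chart for $\omega$ lets me choose a complex structure $I_1$ near $p$ for which $\omega=\Im(\Omega)$ is holomorphic-symplectic and $S_\omega$ is a complex Lagrangian with standard generalized tangent bundle $TS_\omega\dsum N^*S_\omega$. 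Without even-dimensionality this residual symplectic factor cannot be made holomorphic compatibly with the brane, which is the obstruction the hypothesis removes.

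It remains to assemble the two factors, which also requires adapting the gauge on the holomorphic factor. There the brane $S_{hol}$ is a complex coisotropic, but its generalized tangent bundle may differ from $TS_{hol}\dsum N^*S_{hol}$ by a closed $(2,0)+(0,2)$ form, as in Example~\ref{hol coiso example}; standardizing it by a $B$-transform forces a compensating change of complex structure (different holomorphic gauges being related by such $B$-transforms), so I would fix $I$ and the $B$-field together, arranging that $\II_{I_0,\pi_0}$ returns to the form \eqref{gc hol poisson} with $\tau$ standard. Taking $I=I_1\times I_0$ and the induced holomorphic Poisson bivector then puts $\II$ in the form \eqref{gc hol poisson} on the whole neighbourhood, with $S=S_\omega\times S_{hol}$ a complex coisotropic and $\tau=TS\dsum N^*S$; the composite of all the maps used is the required generalized diffeomorphism, whose $B$-transform part realizes the equivalence asserted in Theorem~\ref{brane local form}. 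The routine verifications I am suppressing are that each flow in the Moser step is generated by a section adapted to $\tau$ via \eqref{flow B field}, that straightening one factor preserves the normal form of the other, and the bookkeeping of the $(2,0)+(0,2)$ correction in the holomorphic factor.
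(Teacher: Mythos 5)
Your overall architecture---first normalize $\II$ via Theorem \ref{gc local}, then adapt the resulting gauge to the brane by a Moser-type interpolation---inverts the order of operations that the proof actually requires, and the inversion is not cosmetic. The paper never invokes the product decomposition of Theorem \ref{gc local}. Instead it (a) solves a linear-algebra problem at the single point $p$ (Proposition \ref{linear result}, which is where the even-dimensionality of $S$ enters) to produce a splitting and complex structure at $p$ compatible with \emph{both} $\II_p$ and $\tau_p$, and then (b) re-runs the entire Nash--Moser iteration of \cite{bailey2013} with a modified $\bar\del$-homotopy operator $P$, built from the Nijenhuis--Woolf operator by ``stabilizing it around $S$'' (Lemmas \ref{P isotropic} and \ref{P tangent}), so that every correcting generalized flow is tangent to $S$ and preserves $\tau$ (Proposition \ref{V tangent}). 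The holomorphic gauge is thus constructed adapted to the brane from the outset; no a posteriori fix is attempted.

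The concrete gap is in your steps (i) and (iv). In (iv) you assert that, in the gauge output by Theorem \ref{gc local}, $S_{hol}$ is already a complex coisotropic whose generalized tangent bundle differs from $TS_{hol}\dsum N^*S_{hol}$ only by a closed $(2,0)+(0,2)$ form. That is false: distinct holomorphic gauges for the same $\II$ can have genuinely different underlying complex structures (the relating $B$-fields have nontrivial $(1,1)$ parts), so a given brane need not be complex for whatever gauge Theorem \ref{gc local} happens to produce---as the introduction warns, ``the choice of gauge must be adapted to the brane.'' The Hopf-surface brane \eqref{brane equation} is an explicit counterexample near $S \cap D$: it is not complex for the standard complex structure, and new coordinates $(w_1,w_2)$ must be found. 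In (i), the ``Moser-type argument'' that is supposed to move $S$ and $\tau$ into product/complex position while holding $\II$ fixed would have to be generated by $\II$-preserving generalized vector fields solving a $\bar\del$-type equation with brane-tangency constraints; producing such fields is precisely the content of the modified homotopy operator and the Main Lemma, not a routine Moser step, and your proposal gives no construction. (Your parity bookkeeping, $\dim_\R S$ even if and only if $\dim_\R M_\omega \equiv 0 \bmod 4$, is consistent with how the hypothesis is used in Proposition \ref{linear result}, but it sits downstream of the unjustified product splitting of $S$.)
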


The higher-rank case follows easily from this, and is covered in Corollary \ref{higher rank local form}.

In a neighbourhood of a point, there are potentially many complex structures with respect to which the generalized complex structure is holomorphic Poisson, and we should not expect most of them to be compatible with the brane.  We must make careful choices when producing the local holomorphic gauge if we want this compatibility.  This is realized as a slight modification---or, rather, a specialization---of the original iteration used in the proof of Theorem \ref{gc local}.

In the original iteration, we first specified a local complex structure about the point $p$ (i.e., identifying with a neighbourhood in $\C^n$), such that it agree with the complex structure that $\II$ determined at $p$.  Then, by measuring $\II$'s failure to be holomorphic Poisson with respect to this complex structure, we construct a generalized vector field whose flow takes $\II$ to something which is approximately holomorphic on $\C^n$.  By iterating this process with the usual Nash-Moser convergence tricks, we arrive, in the limit, at a local generalized diffeomorphism taking $\II$ to a holomorphic Poisson structure on a neighbourhood in $\C^n$.

In our new iteration, we start with a stronger ansatz than just adopting the complex structure of $\C^n$.
\begin{defn}\label{ansatz def}
In our \emph{ansatz}, we suppose that we have a generalized complex structure $\II$ defined on a neighbourhood of $p = 0 \in \C^n$ for which
\begin{align*}
\II_p (T_p \C^n) = T_p \C^n \subset \Tc_p \C^n,
\end{align*}
i.e., for which $\II_p$ takes the form \eqref{gc hol poisson}, and for which $\II|_{T_p \C^n}$ agrees with the complex structure.  Furthermore, we suppose that we have a generalized complex brane supported on a linear subspace $S = \C^k \subset \C^n$ ($S$ corresponds to the first $k$ coordinates), with generalized tangent bundle $\tau = TS \dsum N^*S$.
\end{defn}

Setting aside for the moment the question of whether any particular brane may be put in the form of this ansatz, the new iteration is as follows: we construct a generalized vector field whose flow takes $\II$ to something approximately holomorphic, and which is tangent to $S$ in a generalized sense.  Thus (modulo smoothing issues), the ansatz that $S = \C^k$ will not be violated by the flow, and if this iterated flow converges, in the limit we will have a holomorphic Poisson structure and a complex submanifold $S$ (which will happen to be coisotropic).

The steps in establishing Theorem \ref{brane local form} are as follows:
\begin{enumerate}
	\item Specify an ansatz for a generalized complex structure, and a brane thereof (Definition \ref{ansatz def}).
	\item Specify a homotopy operator on the deformation complex which ``preserves the ansatz'' (Section \ref{homotopy section}).
	\item \label{main lemma step} Use the specialized homotopy operator in the above iteration, to conclude that deformations of the complex structure and their branes---if they are small enough, and if they satisfy the ansatz---are locally equivalent to holomorphic Poisson structures and holomorphic coisotropics (Section \ref{the algorithm}).
	\item \label{conditions step} Argue that, given a parity condition, any point in a generalized complex brane has a neighbourhood (in the ambient manifold) satisfying two conditions:
	\begin{enumerate}
	\item It may be put into the form of the ansatz. This is studied first as a linear problem (Section \ref{linear case}), and then as a local problem (in Part 1 of the proof in Section \ref{final proof section}).
	\item On the neighbourhood, the generalized complex structure is a suitably small deformation of the complex structure of the ansatz (Part 2 of the proof in Section \ref{final proof section}).
	\end{enumerate}
\end{enumerate}
Steps \eqref{main lemma step} and \eqref{conditions step} combine to prove the theorem.

\section{The homotopy operator and its generalized flow}\label{homotopy section}

Let us consider the ansatz in more detail.  $\C^n$ has a standard generalized complex structure, with $+i$-eigenbundle $L = T_{0,1} \dsum T^*_{1,0}$.  Let $\iota : S = \C^k \into \C^n$ be the support of our brane, and let $\tau = TS \dsum N^*S \subset \Tc \C^n = T\C^n \dsum T^*\C^n$ be its generalized tangent bundle.  We suppose $\tau_S$ is invariant under some generalized complex structure $\II$ on a polydisc $B_r \subset \C^n$ centred about the origin, and that $\II$ is a deformation of the complex structure, i.e., $\II$ has $+i$-eigenbundle $L_\epsilon = (1+\epsilon)\cdot L$, where
\begin{align}
\epsilon = \underset{\^ ^2 T_{1,0}}{\epsilon_{2,0}} + \underset{T_{1,0} \tens T^*_{0,1}}{\epsilon_{1,1}} + \underset{\^ ^2 T^*_{0,1}}{\epsilon_{0,2}} \in \^ ^2 L^*
\end{align}
(with each component being a section of the bundle indicated underneath). Integrability of $\II$ is equivalent to the Maurer-Cartan equation
\begin{align}\label{MC}
\bar\del \epsilon + \frac{1}{2}[\epsilon,\epsilon].
\end{align}
(This equation of degree-$3$ tensors splits into four equations, in degrees $(3,0)$, $(2,1)$, $(1,2)$ and $(0,3)$.)

\begin{prop}
The $\II$-invariance of $\tau$ is equivalent to three conditions: that $S$ is $\epsilon_{2,0}$--coisotropic, $\epsilon_{1,1}$ acting on $T\C^n$ preserves $TS$, and $S$ is $\epsilon_{0,2}$--isotropic.
\end{prop}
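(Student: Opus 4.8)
The plan is to reduce the $\II$-invariance of $\tau$ to a single dimension count for $\ell = (\tau \tens \C) \cap L_\epsilon$ and then to read off the three stated conditions type by type. First I would record the standard linear fact: since $\tau$ is real and maximal isotropic of rank $2n$, and $\II^2=-1$ gives $L_\epsilon \cap \bar L_\epsilon = 0$, the equality $\II\tau=\tau$ holds iff $\tau\tens\C = \ell \dsum \bar\ell$, where $\bar\ell = \overline\ell = (\tau\tens\C)\cap\bar L_\epsilon$; because the sum is automatically direct and $\dim\bar\ell=\dim\ell$, this is equivalent to the numerical condition $\dim_\C \ell = n$. Since $(1+\epsilon) : L \isoto L_\epsilon$ is an isomorphism, it is in turn equivalent to $\dim_\C \ell_0 = n$, where $\ell_0 = \{u \in L : (1+\epsilon)u \in \tau\tens\C\}$.

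Next I would make the membership condition $(1+\epsilon)u \in \tau\tens\C$ completely explicit. Writing $u = \bar X + \xi$ with $\bar X \in T_{0,1}$ and $\xi \in T^*_{1,0}$, and viewing $\epsilon \in \wedge^2 L^*$ as a map $L \to \bar L \iso L^*$ through the pairing, the four type-components of $(1+\epsilon)u$ in $T_{1,0}\dsum T_{0,1} \dsum T^*_{1,0}\dsum T^*_{0,1}$ are $\iota_\xi\epsilon_{2,0} + \epsilon_{1,1}(\bar X)$, then $\bar X$, then $\xi$, then $\epsilon_{1,1}(\xi) + \iota_{\bar X}\epsilon_{0,2}$. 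Here $\epsilon_{2,0}$ contracts a $(1,0)$-covector to a $(1,0)$-vector, $\epsilon_{0,2}$ a $(0,1)$-vector to a $(0,1)$-covector, while $\epsilon_{1,1}$ acts both as $T_{0,1}\to T_{1,0}$ and, by transpose, as $T^*_{1,0}\to T^*_{0,1}$; note there are no other cross-terms. Membership in $\tau\tens\C = T_{1,0}S \dsum T_{0,1}S \dsum N^*_{1,0}S \dsum N^*_{0,1}S$ thus amounts to four requirements: (1) $\bar X \in T_{0,1}S$; (2) $\xi \in N^*_{1,0}S$; (3) $\iota_\xi\epsilon_{2,0} + \epsilon_{1,1}(\bar X) \in T_{1,0}S$; and (4) $\epsilon_{1,1}(\xi) + \iota_{\bar X}\epsilon_{0,2} \in N^*_{0,1}S$.

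The key observation is that requirements (1) and (2) alone already cut out a subspace $W = T_{0,1}S\dsum N^*_{1,0}S \subset L$ of dimension $k + (n-k) = n$, on which (3) and (4) can only impose further linear constraints; hence $\dim_\C\ell_0 = n$ iff $\ell_0 = W$, i.e.\ iff (3) and (4) hold identically for every $\bar X \in T_{0,1}S$ and $\xi \in N^*_{1,0}S$. As $\bar X$ and $\xi$ vary independently, (3) splits into $\epsilon_{2,0}(N^*_{1,0}S)\subseteq T_{1,0}S$ together with $\epsilon_{1,1}(T_{0,1}S)\subseteq T_{1,0}S$, and (4) splits into $\epsilon_{1,1}(N^*_{1,0}S)\subseteq N^*_{0,1}S$ together with $\epsilon_{0,2}(T_{0,1}S)\subseteq N^*_{0,1}S$. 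The first of these is exactly that $S$ be $\epsilon_{2,0}$-coisotropic, and the last is exactly that $\iota^*\epsilon_{0,2}=0$, i.e.\ that $S$ be $\epsilon_{0,2}$-isotropic.

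The one point requiring care—and the step I expect to be the crux—is that the two middle requirements coincide, so that four conditions collapse to three. This is where the transpose structure of $\epsilon_{1,1}$ enters: in a frame adapted to $S$, writing $\epsilon_{1,1} = \sum A^i_{\bar j}\,\partial_{z_i}\tens d\bar z_j$, both $\epsilon_{1,1}(T_{0,1}S)\subseteq T_{1,0}S$ and $\epsilon_{1,1}(N^*_{1,0}S)\subseteq N^*_{0,1}S$ reduce to the single vanishing $A^i_{\bar j}=0$ for $i>k \geq j$, which invariantly says precisely that $\epsilon_{1,1}$, acting on $T\C^n$, preserves $TS$. Assembling the three surviving conditions—$\epsilon_{2,0}$-coisotropy of $S$, invariance of $TS$ under $\epsilon_{1,1}$, and $\epsilon_{0,2}$-isotropy of $S$—then yields the asserted equivalence.
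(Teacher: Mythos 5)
Your proposal is correct: the paper states this proposition without any proof, so there is nothing to diverge from, and your argument—reducing $\II$-invariance of $\tau$ to the dimension count $\dim_\C \ell = n$, making the four type-components of $(1+\epsilon)(\bar X + \xi)$ explicit, and observing that the two $\epsilon_{1,1}$-conditions coincide by transposition so that four constraints collapse to the three stated—is exactly the standard verification the paper implicitly relies on. The one step worth flagging as genuinely necessary (and which you handle correctly) is the adjointness identity $\langle \epsilon_{1,1}^{*}\xi, \bar Y\rangle = \langle \xi, \epsilon_{1,1}\bar Y\rangle$ showing that $\epsilon_{1,1}(T_{0,1}S)\subseteq T_{1,0}S$ and $\epsilon_{1,1}^{*}(N^{*}_{1,0}S)\subseteq N^{*}_{0,1}S$ are the same condition.
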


We will construct a generalized vector field whose flow, to first order, eliminates $\epsilon_{1,1}$ and $\epsilon_{0,2}$.  A primary ingredient in this construction is the homotopy operator for $\bar\del$, i.e., an operator
$$P : \Omega^{0,q}\left(\^ ^{p,0} TB_r\right) \to \Omega^{0,q-1}\left(\^ ^{p,0} TB_r\right)$$
such that
\begin{align}\label{homotopy relation}
P\bar\del + \bar\del P = \Id.
\end{align}
(The point is that, for approximately-$\bar\del$-closed tensors, $P$ is approximately a right inverse for $\bar\del$.)  The bracket on $L^*$ extends to $\^ ^\bullet L^*$ in the usual way (\emph{\`{a} la} Schouten).  Given such a $P$ and the extended bracket, we define
\begin{align}\label{generalized homotopy operator}
V(\epsilon) = P([\epsilon_{2,0},P\epsilon_{0,2}] - \epsilon_{1,1} - \epsilon_{0,2}),
\end{align}
which will be our generalized vector field.

Such homotopy operators $P$ are not unique.  They are defined in, eg., \cite[equation 2.2.2]{NijenhuisWoolf}, or more recently---with improved estimates---in \cite{Wang2017}, and elsewhere. The definitions vary between sources, but for the most part they are constructed inductively by dimension, and thus are not covariant under permutations of coordinates.

We will take as our starting point the operator defined in \cite{NijenhuisWoolf}, which we will call $Q$. Then we will modify it slightly to define an operator $P$ that satisfies our special needs. $Q$ has the following form: for $\theta$ a $(0,q)$-form on $B_1 \subset \C^n$:
\begin{align}\label{Q formula}
Q\theta = Q_1\pi_1\theta + Q_2\pi_2\theta + \ldots + Q_n\pi_n\theta
\end{align}
where the $Q_i$'s are certain integral linear operators acting independently on each of the coefficient functions, and the $\pi_j$'s are certain operators which ``remove'' factors of $d\bar{z}_j$ from the wedge product, as follows: if $\theta = \sum\limits_{i_1 < \ldots < i_q} a_{i_1\ldots i_q} d\bar{z}_{i_1} \^ \cdots \^ d\bar{z}_{i_q}$, then
\begin{align}
\pi_j \theta = \sum_{j < i_2 < \ldots < i_q} a_{j\, i_2 \ldots i_q} d\bar{z}_{i_q} \^ \cdots \^ d\bar{z}_{i_1}.
\end{align}
In words, $\pi_j\theta$ takes all components in $\theta$ whose lowest-index $d\bar{z}_i$ is in fact $d\bar{z}_j$, and just drops this $d\bar{z}_j$ out of the wedge product. Other components, whose lowest-index factor is not $d\bar{z}_j$, $\pi_j$ sends to zero.

Since $Q$ depends on the ordering of coordinates, we assumed (without loss of generality) that $S = \C^k \subset \C^n$ corresponds to the \emph{first} $k$ coordinates, with $z_{k+1} = \ldots = z_n = 0$ on $S$.  Then
\begin{lem}\label{Q isotropic}
If $\theta$ is a $(0,q)$-form for $q \geq 2$ with respect to which $S$ is istropic, i.e., for which $\iota^*(\theta)=0$, then $S$ is isotropic for $Q\theta$.
\end{lem}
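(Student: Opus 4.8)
The plan is to show directly that $\iota^*(Q\theta) = 0$ by tracking how $Q$ interacts with restriction to $S = \C^k$. Write $\theta = \sum_{i_1 < \cdots < i_q} a_{i_1 \cdots i_q}\, d\bar z_{i_1} \wedge \cdots \wedge d\bar z_{i_q}$; since each $Q_i$ acts independently on the coefficient functions, I may assume the $a_{i_1\cdots i_q}$ are scalar (the $\wedge^{p,0}TB_r$-valued case is identical label-by-label). Recall that $\iota^*$ restricts coefficients to $S$ (i.e.\ sets $z_{k+1} = \cdots = z_n = 0$) and kills every monomial containing a factor $d\bar z_i$ with $i > k$. Thus $S$ is isotropic for $Q\theta$ precisely when, in each summand $Q_j \pi_j\theta$ of $Q\theta = \sum_j Q_j\pi_j\theta$, every monomial $d\bar z_{i_2} \wedge \cdots \wedge d\bar z_{i_q}$ with all $i_\alpha \le k$ carries a coefficient restricting to $0$ on $S$.

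First I carry out a combinatorial reduction isolating which indices $j$ can contribute such a surviving monomial. By construction $\pi_j\theta$ consists of the terms of $\theta$ whose lowest $d\bar z$-index equals $j$, with that factor removed; hence every monomial of $\pi_j\theta$ has the form $d\bar z_{i_2} \wedge \cdots \wedge d\bar z_{i_q}$ with $j < i_2 < \cdots < i_q$. Since $q \ge 2$ there is at least one remaining factor $d\bar z_{i_2}$, and for the monomial to survive $\iota^*$ we need $i_2 \le k$; together with $j < i_2$ this forces $j \le k$. Moreover the coefficient of this monomial in $\pi_j\theta$ is exactly $a_{j\, i_2 \cdots i_q}$, whose multi-index $(j, i_2, \ldots, i_q)$ now lies entirely in $\{1, \ldots, k\}$, so $a_{j\, i_2 \cdots i_q}|_S = 0$ by the hypothesis $\iota^*(\theta) = 0$. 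This is exactly where $q \ge 2$ is used: for $q = 1$ the operators $Q_j$ with $j > k$ would contribute and the statement genuinely fails.

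It remains to show that applying $Q_j$ with $j \le k$ cannot destroy this vanishing, i.e.\ that $f|_S = 0$ implies $(Q_j f)|_S = 0$ for $j \le k$. This is the crux, and it rests on the concrete form of the Nijenhuis--Woolf pieces: $Q_j$ is a Cauchy-type integral in the single variable $z_j$,
\[
(Q_j f)(z) = \frac{1}{2\pi i}\int \frac{f(z_1, \ldots, z_{j-1}, \zeta, z_{j+1}, \ldots, z_n)}{\zeta - z_j}\, d\zeta \wedge d\bar\zeta,
\]
with the remaining coordinates passive. For $j \le k$ the integration variable $\zeta$ replaces a coordinate tangent to $S$, so setting $z_{k+1} = \cdots = z_n = 0$ commutes with the integral: $(Q_j f)|_S$ is the same Cauchy transform applied to $f|_S$, and vanishes when $f|_S = 0$. (For $j > k$ the integration is in a normal direction and this fails, which is precisely why the degree argument above, excluding $j > k$, is essential.) Combining the two steps---and noting that every $j$ contributing to a surviving monomial satisfies $j \le k$, so summing over $j$ preserves the vanishing---every potentially-surviving coefficient of $Q\theta$ restricts to $0$ on $S$, whence $\iota^*(Q\theta) = 0$.

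The main obstacle is the last step: one must invoke not merely that $Q$ is \emph{some} homotopy operator but the explicit integral description of its constituents $Q_j$, in order to see that integration in a tangential direction ($j \le k$) preserves vanishing along $S$. Everything else is bookkeeping with the $\pi_j$ and the pullback, organized so that the dangerous normal-direction operators $Q_j$ with $j > k$ never act on a monomial that survives $\iota^*$.
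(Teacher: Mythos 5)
Your proof is correct, and at the decisive step it takes a genuinely different route from the paper's. The paper's argument is purely combinatorial on the wedge factors: it asserts that every nonzero component of $\theta$ contains a conormal factor $d\bar{z}_m$ with $m>k$, and then checks, in the two cases $j\le k$ and $j>k$, that after $\pi_j$ removes the leading $d\bar{z}_j$ a conormal factor always survives (this is where $q\ge 2$ enters in the case $j>k$), so $\iota^*$ kills the resulting monomial no matter what $Q_j$ does to its coefficient; no property of the operators $Q_j$ beyond linearity on coefficients is used. You instead isolate the monomials of $Q_j\pi_j\theta$ that could survive $\iota^*$---forcing $j\le k$ and a purely tangential multi-index---and then argue that the corresponding coefficient $a_{j\,i_2\ldots i_q}$ vanishes on $S$ and that $Q_j$, involving integration only in tangential directions, preserves that vanishing. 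Your route costs an appeal to the explicit integral form of the Nijenhuis--Woolf operators, but it buys coverage of a case the paper's opening assertion passes over: the hypothesis $\iota^*\theta=0$ does not force every component with multi-index contained in $\{1,\ldots,k\}$ to vanish identically, only its coefficient's restriction to $S$; such a component has no conormal factor, and since $Q_j$ is nonlocal one really does need your tangential-integration observation to dispose of it. One small caveat: the operator $Q_j$ of \cite{NijenhuisWoolf} is not quite a single-variable Cauchy transform---it is assembled from integrations in the variables $z_1,\ldots,z_j$---but all of those indices are at most $j\le k$, hence tangential to $S=\C^k$, so your argument that restriction to $S$ commutes with $Q_j$ goes through unchanged.
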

In fact, Lemma \ref{Q isotropic} will also hold for the $P$ we define, as we shall see.
\begin{proof}
Since $S$ is isotropic for $\theta$, for any nonzero component---say, $a_{i_1\ldots i_q}\, d\bar{z}_{i_1}\^\ldots\^d\bar{z}_{i_q}$---there must be at least one factor---say, $d\bar{z}_m$---which is conormal to $S$, i.e., for which $m > k$. We consider terms $Q_j\pi_j\theta$ of $Q\theta$ falling into two cases:

Case 1: $j \leq k$ (the dimension of $S$). $\pi_j$ takes those components---say $a_{j\,i_2\ldots i_q}\, d\bar{z}_j  \^ d\bar{z}_{i_2}\^\ldots\^d\bar{z}_{i_q}$---which have a leading $d\bar{z}_j$ factor, and drops that factor.  In this case, since $j \leq k < m$, the conormal $d\bar{z}_m$ mentioned above was not $d\bar{z}_j$, and so it is still present among the remaining factors, $a_{j\,i_2\ldots i_q}\,d\bar{z}_{i_2}\^\ldots\^d\bar{z}_{i_q}$, and thus $S$ is still isotropic for these factors.  (Then $Q_j$ acts linearly on the coefficient functions without changing this fact.) 

Case 2: $j > k$. $\pi_j$ acts only on components---say $a_{j\,i_2\ldots i_q}\, d\bar{z}_{i_1}\^\ldots\^d\bar{z}_{i_q}$---of $\theta$ for which all the $j$ and $i_\bullet$'s are greater than $k$, in which case $S$ is once again isotropic for the remaining factors.
\end{proof}

$Q$ may also be defined on $\Omega^{0,q}\left(\^ ^{p,0} TB_r\right)$.  If $\alpha$ is a multi-index of length $p$ and $\theta_\alpha$ is a $(0,q)$-form, then let
\begin{align}\label{mixed tensor decomposition}
Q\left(\theta_\alpha \tens \frac{\del}{\del z^\alpha}\right) = Q(\theta_\alpha) \tens \frac{\del}{\del z^\alpha}
\end{align}
with extension to $\Omega^{0,q}\left(\^ ^{p,0} TB_r\right)$ by linearity.

Unfortunately, if we set $P=Q$ in general, the vector field part of $V(\epsilon)$ defined in \eqref{generalized homotopy operator} will not be $S$-preserving, given our assumptions on $\epsilon$.  We will define $P$ in a way that ``stabilizes $Q$ around $S$.''

Let $\iota : S \into \C^n$ be the inclusion and let $\rho : \C^n \to S$ be the projection.  For a $(0,q)$-form $\theta$, let
\begin{align}
s(\theta) = \rho^*(\iota^*(\theta))
\end{align}
In words, $s$ pulls back $\theta$ to $S$ and then stretches this form out over $\C^n$ again.  Then let
\begin{align}
P\theta = Q\theta - s(Q\theta) + Q\,s(\theta),
\end{align}
with extension to mixed tensors in $\Omega^{0,q}\left(\^ ^{p,0} TB_r\right)$ analogously to 
\eqref{mixed tensor decomposition}.  We note that $\bar\del$ commutes with $s$, and it follows that since $Q\bar\del + \bar\del Q = \Id$, also $P\bar\del + \bar\del P = \Id$, satisfying relation \eqref{homotopy relation}.

\begin{lem}\label{P isotropic}
If $\theta$ is an $S$-isotropic $(0,q)$-form for $q \geq 2$, then $P\theta = Q\theta$.  Thus, Lemma \ref{Q isotropic} holds for $P$ in place of $Q$.
\end{lem}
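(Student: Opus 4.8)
The plan is to exploit the fact that $P$ and $Q$ differ only by the two correction terms $-\,s(Q\theta)$ and $Q\,s(\theta)$, and to show that both vanish whenever $\theta$ is $S$-isotropic. Since $P\theta = Q\theta - s(Q\theta) + Q\,s(\theta)$, establishing that $s(Q\theta)=0$ and $Q\,s(\theta)=0$ immediately yields $P\theta = Q\theta$, which is the first assertion of the lemma.

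The second correction term is the easier of the two. By definition $s(\theta) = \rho^*(\iota^*(\theta))$, and the hypothesis that $S$ is isotropic for $\theta$ is precisely that $\iota^*(\theta)=0$; hence $s(\theta)=0$, and therefore $Q\,s(\theta)=0$. For the first correction term I would invoke Lemma \ref{Q isotropic} directly: since $\theta$ is an $S$-isotropic $(0,q)$-form with $q \geq 2$, that lemma gives that $Q\theta$ is again $S$-isotropic, i.e.\ $\iota^*(Q\theta)=0$. Consequently $s(Q\theta) = \rho^*(\iota^*(Q\theta)) = 0$ as well. Combining the two, $P\theta = Q\theta$.

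The closing claim --- that Lemma \ref{Q isotropic} holds for $P$ in place of $Q$ --- is then immediate: $P\theta = Q\theta$ and, by Lemma \ref{Q isotropic} itself, $Q\theta$ is $S$-isotropic, so $P\theta$ is $S$-isotropic too. For mixed tensors in $\Omega^{0,q}\left(\wedge^{p,0} TB_r\right)$ the same argument applies termwise, since $Q$ and $s$ (and hence $P$) act only on the form factor $\theta_\alpha$ while leaving the holomorphic multivector $\partial/\partial z^\alpha$ untouched, and the isotropy condition is imposed on the form factor.

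I do not expect a serious obstacle here; the real content is carried entirely by Lemma \ref{Q isotropic}, and indeed $P$ was constructed precisely so that its correction terms are designed to cancel on isotropic forms. The one point that warrants care is the bookkeeping of degrees: one must check that the hypothesis $q \geq 2$ is exactly what permits the application of Lemma \ref{Q isotropic} to $\theta$ (whose image $Q\theta$ then has degree $q-1 \geq 1$, a degree on which the condition $\iota^*(Q\theta)=0$ remains meaningful), so that the two appeals to isotropy --- for $\theta$ and for $Q\theta$ --- are both legitimate.
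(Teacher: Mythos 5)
Your proof is correct and follows essentially the same route as the paper: both arguments observe that $s(\theta)=0$ because $\iota^*(\theta)=0$, and that $s(Q\theta)=0$ by Lemma \ref{Q isotropic}, so the two correction terms in the definition of $P$ vanish. Your extra remarks on mixed tensors and degree bookkeeping are accurate but not needed beyond what the paper records.
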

\begin{proof}
Since $S$ is isotropic for $\theta$, $\iota^*(\theta)=0$, and so $s(\theta)=0$.  Furthermore, from Lemma \ref{Q isotropic} we see that $s(Q\theta)=0$.  Thus,
\begin{align}
P\theta = Q\theta - s(Q\theta) + Q\,s(\theta) = Q\theta.
\end{align}
\end{proof}

\begin{lem}\label{P tangent}
If $\theta \in \Omega^{0,1}(\^ ^{p,0}TB_r)$ such that $\theta : T_{0,1}B_r \to \^ ^{p,0}TB_r$ takes $T_{0,1}S$ to $\^ ^{p,0}TS$, then $P\theta|_S \in C^\infty(\^ ^{p,0}TS)$.
\end{lem}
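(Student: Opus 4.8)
The plan is to reduce everything to a componentwise statement in the coordinate frame $\{\partial/\partial z^\alpha\}$, where $P$ acts only on the $(0,1)$-form coefficients. Writing $\theta = \sum_\alpha \theta_\alpha \tens \partial/\partial z^\alpha$ with each $\theta_\alpha \in \Omega^{0,1}(B_r)$, formula \eqref{mixed tensor decomposition} gives $P\theta = \sum_\alpha (P\theta_\alpha) \tens \partial/\partial z^\alpha$. The coordinate vectors $\partial/\partial z^\alpha$ indexed by multi-indices $\alpha \subset \{1,\ldots,k\}$ already span $\wedge^{p,0}TS$, so the desired conclusion reduces to showing that the remaining coefficients vanish along $S$: namely $(P\theta_\alpha)|_S = 0$ for every $\alpha$ containing an index $> k$.

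First I would translate the hypothesis into components. The condition that $\theta$ carry $T_{0,1}S$ into $\wedge^{p,0}TS$ is read off pointwise along $S$ by evaluating $\theta(\partial/\partial\bar z_j) = \sum_\alpha \theta^\alpha_{\bar j}\,\partial/\partial z^\alpha$ for $j \leq k$; demanding the result lie in $\wedge^{p,0}TS$ forces $\theta^\alpha_{\bar j}|_S = 0$ whenever $j \leq k$ and $\alpha$ contains an index $> k$. On the other hand $\iota^*(d\bar z_j) = 0$ for $j > k$, so $\iota^*(\theta_\alpha) = \sum_{j \leq k} (\theta^\alpha_{\bar j}|_S)\, d\bar z_j$, and by the previous line this vanishes for exactly those $\alpha$ meeting the normal directions. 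Thus the geometric hypothesis is equivalent to the clean component statement $\iota^*(\theta_\alpha) = 0$ for every such $\alpha$.

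With that in hand the computation for a single such coefficient is short. If $\eta := \theta_\alpha$ is a scalar $(0,1)$-form with $\iota^*\eta = 0$, then $s(\eta) = \rho^*\iota^*\eta = 0$, so $P\eta = Q\eta - s(Q\eta)$. Here $Q\eta$ is a $(0,0)$-form, i.e. a function, and $s(Q\eta) = \rho^*\iota^*(Q\eta)$. Restricting to $S$ and using $\rho \comp \iota = \Id_S$ (hence $\iota^*\rho^* = \Id$) gives $(P\eta)|_S = \iota^*(Q\eta) - \iota^*\rho^*\iota^*(Q\eta) = 0$. Assembling the components, $(P\theta)|_S = \sum_{\alpha \subset \{1,\ldots,k\}} (P\theta_\alpha)|_S \tens \partial/\partial z^\alpha \in C^\infty(\wedge^{p,0}TS)$, as desired.

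The step I expect to carry the real content is the cancellation via the subtracted term $-s(Q\theta)$: here $q = 1$ falls below the range $q \geq 2$ in which Lemma \ref{P isotropic} gives $P = Q$, so one cannot simply invoke the isotropy behavior of $Q$. It is precisely the stabilizing correction in the definition of $P$ that kills the boundary value, and verifying this requires the identity $\iota^*\rho^* = \Id$ together with the observation that $Q\eta$ has degree zero, so that $s$ acts on it as honest restriction-then-extension. The main care is conceptual rather than computational: interpreting the hypothesis pointwise along $S$ and converting it faithfully into the pullback condition $\iota^*\theta_\alpha = 0$.
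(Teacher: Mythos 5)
Your proof is correct and follows essentially the same route as the paper's: decompose $\theta$ componentwise in the frame $\partial/\partial z^\alpha$, observe that the coefficients attached to non-tangent multi-indices pull back to zero on $S$ (so $s(\theta_\alpha)=0$), and then use $\iota^*\rho^* = \Id$ to see that $Q\theta_\alpha$ and $s(Q\theta_\alpha)$ agree along $S$, killing $P\theta_\alpha|_S$. If anything, your version is slightly more careful than the paper's, since by pulling back the whole $(0,1)$-form $\theta_\alpha$ you also handle the components $f\,d\bar z_i$ with $i\le k$ whose coefficient merely vanishes on $S$, a case the paper's proof passes over quickly.
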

\begin{proof}
Consider a component of the form $f\, d\bar{z}_i \tens \frac{\del}{\del z^\alpha}$, where $f$ is a function and $\alpha$ is a multi-index.  If $i \leq k$, then by the hypothesis $\frac{\del}{\del z^\alpha}$ is multi-tangent to $S$, and so, regardless what the coefficient $P(fd\bar{z}_i)$ turns out to be, the result holds for this component.

On the other hand, if $i > k$, $\frac{\del}{\del z^\alpha}$ is \emph{not} tangent to $S$, so we have to hope that the coefficient function $P(fd\bar{z}_i)$ vanishes on $S$.  But $d\bar{z}_i$ is conormal to $S$, so
\begin{align}
P(fd\bar{z}_i) &= Q(fd\bar{z}_i) - s(Q(fd\bar{z}_i)) + Q\,s(fd\bar{z}_i) \notag \\
&= Q(fd\bar{z}_i) - s(Q(fd\bar{z}_i)) + 0.
\end{align}
On $S$, the functions $Q(fd\bar{z}_i)$ and $s(Q(fd\bar{z}_i))$ are equal, so $P(fd\bar{z}_i)|_S = 0$, as desired.
\end{proof}

We have the ingredients we need to prove that our generalized vector field is ``tangent'' to $S$:
\begin{prop}\label{V tangent}
In the setup we have defined, with $S = \C^k \subset \C^n$ a brane with generalized tangent bundle $\tau = TS\dsum T^*S$, $\epsilon = \epsilon_{2,0} + \epsilon_{1,1} + \epsilon_{0,2}$ with $\epsilon_{2,0} \in C^\infty(\^ ^{2,0} TB_r)$ making $S$ coisotropic, $\epsilon_{1,1} \in C^\infty(T_{1,0}B_r \tens T^*_{0,1}B_r)$ preserving $TS$, and $\epsilon_{0,2} \in \Omega^{0,2}(B_r)$ making $S$ isotropic, and $P$ as defined, the generalized vector field
\begin{align}\label{V expressed again}
V(\epsilon) := P([\epsilon_{2,0},P\epsilon_{0,2}] - \epsilon_{1,1} - \epsilon_{0,2}),
\end{align}
lies in $\tau$ when restricted to $S$.
\end{prop}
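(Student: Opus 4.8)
The plan is to split $V(\epsilon)$ into its vector-field part (which should land in $TS$) and its $1$-form part (which should land in $N^*S$), and to treat each using the three homotopy lemmas. First I would determine the tensor type of the argument $W := [\epsilon_{2,0},P\epsilon_{0,2}] - \epsilon_{1,1} - \epsilon_{0,2}$ of the outer $P$. Writing $L^* = \bar L = T_{1,0}\dsum T^*_{0,1}$, the summands $\epsilon_{1,1}$ and $\epsilon_{0,2}$ sit in $T_{1,0}\tens T^*_{0,1}$ and $\^^2 T^*_{0,1}$, while $P\epsilon_{0,2}$ is a $(0,1)$-form, i.e. a section of $T^*_{0,1}$. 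A short computation with the (Schouten-extended) Courant bracket shows that $[\epsilon_{2,0},P\epsilon_{0,2}]$ is again of type $(1,1)$: since a $1$-form has vanishing anchor, the bracket differentiates only $P\epsilon_{0,2}$ and is $C^\infty$-linear in $\epsilon_{2,0}$. Hence $W = W^{(1,1)} + W^{(0,2)}$ with no $\^^2 T_{1,0}$ component, where $W^{(1,1)} = [\epsilon_{2,0},P\epsilon_{0,2}] - \epsilon_{1,1}$ and $W^{(0,2)} = -\epsilon_{0,2}$, and correspondingly $V(\epsilon) = P(W^{(1,1)}) + P(W^{(0,2)})$, the first term a section of $T_{1,0}$ and the second a $(0,1)$-form.

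For the $1$-form part, $P(W^{(0,2)}) = -P\epsilon_{0,2}$, I would invoke the isotropy lemmas directly: since $\epsilon_{0,2}$ is $S$-isotropic, Lemma \ref{P isotropic} gives $P\epsilon_{0,2} = Q\epsilon_{0,2}$, and Lemma \ref{Q isotropic} (with $q=2$) shows $\iota^*(P\epsilon_{0,2}) = 0$, so that $P\epsilon_{0,2}|_S$ is conormal and lies in $N^*S$. For the vector-field part I would apply Lemma \ref{P tangent} with $p=1$, which reduces the claim to showing that $W^{(1,1)}$, read as a map $T_{0,1}\to T_{1,0}$, carries $T_{0,1}S$ into $T_{1,0}S$ along $S$. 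The contribution of $\epsilon_{1,1}$ does this by hypothesis (it preserves $TS$), so everything comes down to the bracket term.

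The main obstacle is precisely this last claim about $[\epsilon_{2,0},P\epsilon_{0,2}]$. In coordinates $z_1,\dots,z_n$ with $S = \{z_{k+1}=\dots=z_n=0\}$, writing $\epsilon_{2,0} = \tfrac12 P^{ab}\,\del_a\^\del_b$ and $P\epsilon_{0,2} = \xi_{\bar c}\,d\bar z_c$, the algebraic-in-$\epsilon_{2,0}$ bracket evaluates on $\del_{\bar m}$ to $\sum_{a,b} P^{ab}(\del_b\xi_{\bar m})\,\del_a$. For $m\le k$ (so $\del_{\bar m}\in T_{0,1}S$) I must check that the coefficient of $\del_a$ with $a>k$ vanishes on $S$. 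Splitting the sum over $b$: the terms with $b>k$ vanish because $P^{ab}|_S = 0$ for $a,b>k$ (this is the $\epsilon_{2,0}$-coisotropy of $S$, i.e. $\epsilon_{2,0}^\sharp(N^*S)\subset TS$), while the terms with $b\le k$ vanish because $\xi_{\bar m}|_S = 0$ for $m\le k$ (the conormality of $P\epsilon_{0,2}|_S$ just established) and $\del_b$ with $b\le k$ is a derivative along $S$, so $(\del_b\xi_{\bar m})|_S = 0$. Thus $W^{(1,1)}$ satisfies the hypothesis of Lemma \ref{P tangent}, giving $P(W^{(1,1)})|_S\in T_{1,0}S$, and combining with the $1$-form part yields $V(\epsilon)|_S\in T_{1,0}S\dsum N^*_{0,1}S\subset\tau\tens\C$, as required. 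The one point demanding care is the interplay of the two geometric hypotheses—coisotropy kills the conormal–conormal block of $\epsilon_{2,0}$ on $S$, while isotropy kills the tangential components of $P\epsilon_{0,2}$ on $S$—together with the elementary but essential observation that tangential derivatives preserve the vanishing of a function along $S$.
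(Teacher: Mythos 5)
Your proof is correct and follows the same skeleton as the paper's: treat the three summands separately, dispose of $-\epsilon_{0,2}$ via Lemmas \ref{Q isotropic} and \ref{P isotropic} (giving $P\epsilon_{0,2}|_S \in N^*S$), dispose of $-\epsilon_{1,1}$ via Lemma \ref{P tangent}, and reduce the bracket term to the hypothesis of Lemma \ref{P tangent}. Where you genuinely diverge is in the treatment of $[\epsilon_{2,0},P\epsilon_{0,2}]$. The paper asserts in one line that $P\epsilon_{0,2}|_S \in N^*_{0,1}S$ forces $[\epsilon_{2,0},P\epsilon_{0,2}]|_S \in T_{1,0}B_r|_S \tens N^*_{0,1}S$, hence that the associated map kills $T_{0,1}S$ outright; but the bracket is first order in $P\epsilon_{0,2}$, so its value along $S$ is not determined by $P\epsilon_{0,2}|_S$ alone, and the components $\sum_b P^{ab}(\del_b \xi_{\bar c})|_S$ with $a\le k$, $c \le k$, $b>k$ need not vanish --- the paper's intermediate claim is stronger than what is true. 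Your coordinate computation establishes the correct, weaker statement that actually feeds into Lemma \ref{P tangent} (the coefficient of $\del_a$ with $a>k$ against $\del_{\bar m}$ with $m\le k$ vanishes on $S$), and in doing so you make explicit the two ingredients the paper's justification silently omits: the $\epsilon_{2,0}$-coisotropy of $S$, which kills the conormal--conormal block $P^{ab}|_S$ for $a,b>k$, and the fact that tangential derivatives preserve vanishing along $S$, which kills the $b\le k$ terms. So your argument is not merely equivalent --- it repairs a small but real imprecision in the published proof, at the cost of a short coordinate calculation; the paper's version is slicker but, as written, does not use the coisotropy hypothesis at all, which should be a red flag since the statement is false without it.
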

\begin{proof}
Starting with the rightmost term of \eqref{V expressed again}, $P\epsilon_{0,2}$ lies in $T^*S$ by Lemma \ref{P isotropic}.  $P\epsilon_{1,1}$ lies in $TS$ by Lemma \ref{P tangent}.  Finally, since $P\epsilon_{0,2}|_S \in C^\infty(N^*_{0,1}S)$, therefore $[\epsilon_{2,0},P\epsilon_{0,2}]|_S \in C^\infty(T_{1,0}B_r|_S \tens N^*_{0,1}S)$; thus, $[\epsilon_{2,0},P\epsilon_{0,2}]|_S : T_{0,1}S \to 0$ and, by Lemma \ref{P tangent}, $P[\epsilon_{2,0},P\epsilon_{0,2}]|_S$ lies in $TS$.
\end{proof}

\begin{cor}
Under the above hypotheses, the generalized flow generated by $V(\epsilon)$ leaves $S$ and $\tau$ invariant.
\end{cor}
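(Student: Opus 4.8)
The plan is to split the generalized flow into its two constituent parts---the underlying diffeomorphism and the $B$-transform---and to check that each preserves $S$ and $\tau$ separately, using Proposition \ref{V tangent} as the only real input. Write $V(\epsilon) = X + \xi$, where, by linearity of $P$, the vector part is $X = P[\epsilon_{2,0},P\epsilon_{0,2}] - P\epsilon_{1,1} \in C^\infty(T_{1,0}B_r)$ and the one-form part is $\xi = -P\epsilon_{0,2} \in \Omega^{0,1}(B_r)$. Proposition \ref{V tangent} says $V(\epsilon)|_S \in \tau = TS \dsum N^*S$; reading off the two summands gives $X|_S \in TS$ and $\xi|_S \in N^*S$, the latter being exactly the condition $\iota^*\xi = 0$. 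Since $S = \C^k$ is a complex submanifold, these memberships pass to real and imaginary parts, so I may run the argument on the real generalized vector field underlying $V(\epsilon)$.

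For the diffeomorphism part $\phi_t = \exp(tX)$: because $X$ is tangent to $S$ at every point of $S$, the standard ODE argument shows $\phi_t(S) = S$ for $t$ small enough that the flow stays in the polydisc. A diffeomorphism fixing $S$ carries $TS$ to $TS$ under pushforward and $N^*S = \Ann(TS)$ to itself under inverse pullback, so $(\phi_t)_*$ sends $\tau|_S$ to $\tau|_{\phi_t(S)} = \tau|_S$.

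For the $B$-transform part $e^{B_t}$, with $B_t = \int_{s=0}^t \phi_s^*(d\xi)\,ds$ as in \eqref{flow B field}, the key point is that $B_t$ pulls back trivially to $S$. Using $\iota^*\comp\phi_s^* = (\phi_s|_S)^*\comp\iota^*$ (valid because $\phi_s$ preserves $S$) and $\iota^* d = d\,\iota^*$,
\begin{align*}
\iota^* B_t = \int_{s=0}^t (\phi_s|_S)^*\,d(\iota^*\xi)\,ds = 0,
\end{align*}
since $\iota^*\xi = 0$. As $e^{B_t}(Y+\eta) = Y + \iota_Y B_t + \eta$, for $Y+\eta \in \tau|_S$ (so $Y \in TS$ and $\eta \in N^*S$) the vanishing $\iota^* B_t = 0$ says exactly that $\iota_Y B_t$ annihilates $TS$, i.e.\ $\iota_Y B_t \in N^*S$; hence $e^{B_t}$ also preserves $\tau|_S$.

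Since the generalized flow at time $t$ is the composite of $(\phi_t)_*$ and $e^{B_t}$, both of which preserve $S$ and $\tau|_S$, the flow leaves $S$ and $\tau$ invariant. The argument is essentially a corollary of Proposition \ref{V tangent}; the only genuine computation is $\iota^* B_t = 0$, and the one point requiring a little care is the bookkeeping identifying $\iota^*\xi = 0$ with the conormal half of $V(\epsilon)|_S \in \tau$, together with keeping $t$ small enough that $\phi_t$ remains defined on the chosen neighbourhood.
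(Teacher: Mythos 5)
Your proof is correct and follows essentially the same route as the paper's: both reduce to Proposition \ref{V tangent}, handle the diffeomorphism part by tangency of the vector field, and handle the $B$-transform part by observing that the integrated one-form pulls back to zero on $S$ (the paper writes $B_t = d\xi_t$ with $\xi_t$ conormal, which is your computation $\iota^* B_t = 0$ in slightly different notation). Your version merely makes explicit a few steps the paper leaves implicit, such as the identity $\iota^*\comp\phi_s^* = (\phi_s|_S)^*\comp\iota^*$ and the passage from the complex generalized vector field to its real part.
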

\begin{proof}
By Proposition \ref{V tangent}, the real vector part, $X$, of $V(\epsilon)$ is tangent to $S$, so the diffeomorphism, $\phi_t$, of the generalized flow leaves $S$ invariant.  If $\xi$ is the real 1-form part of $V(\epsilon)$, then, as per \eqref{flow B field}, the $B$-transform part of the generalized flow is $d\xi_t$, where
\begin{align}
\xi_t := \int_{s=0}^t \phi_s^*(\xi) ds.
\end{align}
Of course, $\xi_t$ is conormal to $S$ at all $t$.  For $Y \in TS$ and $\eta \in N^*S$, $d\xi_t$ acts on some $Y+\eta \in \tau$ via
\begin{align}
e^{d\xi_t}\cdot (Y+\eta) &= Y + \iota_Y d\xi_t + \eta
\end{align}
Since $\xi_t \in N^*S$ and $\iota_Y\xi_t = 0$, it follows that $\iota_Y d\xi_t \in N^*S \subset \tau$, as required.
\end{proof}

\section{The algorithm}\label{the algorithm}

We briefly review the algorithm in \cite{bailey2013} that proves Theorem \ref{gc local}, so that we can see how our modification along with the ansatz proves our results (Main Lemma \ref{main lemma} and Theorem \ref{brane local form}).  Since the modification is minor, we do not repeat the details here---they may be found in \cite{bailey2013}. The algorithm is inspired by Conn \cite{Conn1985}, whose basic idea is formalized in a technical result of Miranda-Monnier-Zung \cite{MMZ}.

Let $\epsilon = \epsilon_{2,0} + \epsilon_{1,1} + \epsilon_{0,2}$ be a generalized complex deformation of the standard complex structure on a polydisc, $B_r \subset \C^n$, of radius $r$, where $\epsilon_{2,0}$ a $(2,0)$-bivector, $\epsilon_{1,1}$ a section of $T_{1,0}B_r \tens T^*_{0,1}B_r$, and $\epsilon_{0,2}$ a $(0,2)$-form, and where $\epsilon$ is Maurer-Cartan \eqref{MC}.

With the bracket and the $\bar\del$-homotopy operator $P$, we construct the generalized vector field $V(\epsilon) = P([\epsilon_{2,0},P\epsilon_{0,2}] - \epsilon_{1,1} - \epsilon_{0,2})$.  The infinitesimal action of $V(\epsilon)$ on the deformation $\epsilon$ gives a new deformation $V\cdot\epsilon = \bar\del V + [V,\epsilon]$ which, by design, has $(1,1)$ and $(0,2)$ parts which are quadratically small in $\epsilon_{1,1}$ and $\epsilon_{0,2}$ (see \cite[Lemma 6.9]{bailey2013}.

Similar quadratic estimates hold for the time-1 generalized flow, $\phi_V \cdot \epsilon$, of $V(\epsilon)$ acting on $\epsilon$ (see \cite[Lemma 6.11]{bailey2013}); therefore, if we were to iterate the process, with $\epsilon^{n+1} = \phi_{V(\epsilon^n)} \cdot \epsilon^n$, we would hope that the limit $\epsilon^\infty := \lim_{n\to\infty}\epsilon^n$ had only a bivector component, and \eqref{MC} would tell us that it was holomorphic and Poisson.

There are two issues.  First, $\phi_V$ is only a local diffeomorphism about $0 \in B_r$, and the image of $B_r$ under $\phi_V$ does not contain $B_r$, but rather a smaller ball $B_{r'},\, r'<r$.  This is not much of a problem: if we take care that the radius does not shrink too fast, then in the limit $\epsilon^\infty$ is still defined on a positive radius.

Second, at each stage of the iteration we ``lose derivatives,'' so that our norm estimates for the $k$-th derivative of $\epsilon^{n+1}$ will depend on various $k+l$-th derivatives of $\epsilon^n$ for some $l>0$, and in the limit we expect no convergence at all. The solution, due to Nash and Moser (see \cite{Hamilton} for a review of the general approach), is to apply ``smoothing operators,'' $S_t$, to $V(\epsilon^n)$ at each stage, which gain back some derivatives, at the cost of making an approximation.  If the ``strength,'' $t$, of the smoothing at each stage is wisely chosen to balance these two concerns, then indeed we will get convergence to an appropriate $\epsilon^\infty$.

Of course, this only works if $\epsilon$ is a small deformation in some sense. A bit of work (see \cite[Section 7]{bailey2013}) goes into showing that, on a suitably small neighbourhood of the points in question, a generalized complex structure meets this criterion, but in the end Theorem \ref{gc local} follows.

\subsection{The modification, and the Main Lemma}\label{main lemma section}

In Definition \ref{ansatz def} we have specified the ansatz that the deformed generalized complex structure admits a rank-0 brane $S = \C^k \subset \C^n$ with generalized tangent bundle $\tau = TS \dsum N^*S$. Whereas in \cite{bailey2013}, $P$ only needed to be a homotopy operator for $\bar\del$ satisfying some estimates, here we have chosen a special $P$ such that $V(\epsilon)|_S$ lies in $\tau$, and thus the generalized flow of $V(\epsilon)$ preserves $S$ and $\tau$.  Our $P$ satisfies all of the same formal properties as in \cite{bailey2013}, and so our choice does not affect the proof.  A point of concern is that the smoothed $S_t(V(\epsilon))$ may no longer satisfy the generalized tangency condition.  However, it is straightforward to construct smoothing operators which act only the space of generalized vector fields tangent to $S$, in much the same way as one would otherwise construct smoothing operators (once again, see \cite{Hamilton} for details).

Since our homotopy and smoothing operators specialize the requirements of \cite{bailey2013}, the whole proof of the Main Lemma of \cite{bailey2013} goes through without change; and since the algorithm preserves the ansatz, we have
\begin{thm:mainlem}\label{main lemma}
In the set-up and notation of Section \ref{the algorithm}, if $\epsilon$ is small enough, there is a local generalized diffeomorphism $\phi$ in a neighbourhood of $0 \in \C^n$, preserving $S=\C^k$ and $\tau = TS \dsum N^*S$, such that $\phi\cdot\epsilon$ is a holomorphic (on $\phi(B_r) \subset \C^n$) Poisson bivector.
\end{thm:mainlem}

\section{The linear case}\label{linear case}

In order to justify the ansatz of Definition \ref{ansatz def}, we must first establish a linear version of Theorem \ref{brane local form}.

Let $V$ be a real vector space, and let $\mathbb{V} \iso V \dsum V^*$ be a ``linear exact Courant algebroid,'' that is, $\mathbb{V}$ has a nondegenerate symmetric pairing, and an anchor map $a : \mathbb{V} \to V$ such that
\begin{align}
0 \to V^* \oto{a^*} \mathbb{V} \oto{a} V \to 0 \label{linear exact sequence}
\end{align}
is exact.  (As usual, we treat $V^* \subset \mathbb{V}$ as a subspace.)  Let $\II : \mathbb{V} \to \mathbb{V}$ be a linear generalized complex structure on $V$---i.e., an orthogonal anti-involution of $\mathbb{V}$---with real bivector $P := a \comp \II |_{V^*} : V^* \to V$, and suppose we have the data of a ``linear rank-0 brane'', i.e., a ``supporting subspace'' $S \subset V$ along with a ``generalized tangent bundle'' $\tau \subset \mathbb{V},$ such that $\tau$ is an $\II$--invariant maximal isotropic and $a(\tau) = S$.
\begin{prop}\label{linear result}
If $\dim_\R S = 0 \mod 2$, then there exists a splitting $s : V \to \mathbb{V}$ of the sequence \eqref{linear exact sequence} and a complex structure $I : V \to V$ and a $B$-field $B \in V^* \^ V^*$ such that
\begin{itemize}
\item $\II$ is ``linear complex Poisson'' with respect to $\mathbb{V} = s(V) \dsum V^*$, i.e.,\begin{align}\label{linear hol form}
\II =
\begin{pmatrix}
-I & P\\
0 & I^*
\end{pmatrix}
\begin{array}{l}
\scriptstyle{s(V)} \\
\scriptstyle{\;V^*}
\end{array}
\end{align}
for some complex structure $I : V \to V$,
\item $S \subset V$ is a complex subspace for $I$, and
\item $\tau = s(S) \dsum N^*S$.
\end{itemize}
\end{prop}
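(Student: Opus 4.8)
The plan is to collapse all three conclusions into a single linear‑algebraic existence statement and then to prove that statement by working inside the $+i$‑eigenspace $L$ of $\II$ and invoking Witt's theorem for an indefinite Hermitian form. Concretely, I would show that everything follows from the existence of one subspace: an $\II$‑invariant, isotropic complement $W\subset\mathbb V$ to $V^*$ satisfying $\dim_\R(W\cap\tau)=\dim_\R S$. The splitting $s:V\to\mathbb V$ is then the one with image $W$, the complex structure is $I:=-(a|_W)\comp\II\comp(a|_W)^{-1}$, and the bivector is the intrinsic $P=a\comp\II|_{V^*}$.

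The three bullets come out as follows. Because $W$ is $\II$‑invariant and isotropic, $\II$ takes exactly the block form \eqref{linear hol form} with this $I$, and $W\cap V^*=0$ supplies the needed transversality, giving the first bullet. Since $\tau$ is maximal isotropic with $a(\tau)=S$, one has $\tau\cap V^*=N^*S$, so $W\cap N^*S\subset W\cap V^*=0$; the dimension hypothesis then forces the direct decomposition $\tau=(W\cap\tau)\dsum N^*S$ with $W\cap\tau=s(S)$, which is the third bullet. Finally, feeding $\tau=s(S)\dsum N^*S$ and $\II\tau=\tau$ into \eqref{linear hol form} and reading off the $V^*$‑component of $\II$ applied to $N^*S$ shows $I^*(N^*S)\subset N^*S$, i.e. $IS=S$ (the complementary condition $P(N^*S)\subset S$ being precisely coisotropy); this is the second bullet.

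It remains to build $W$, and here the parity hypothesis is transparent: $W$ and $\tau$ are both $\II$‑invariant, hence so is $W\cap\tau$, so $\dim_\R(W\cap\tau)$ is automatically even and we are demanding that it equal $\dim_\R S$. To construct $W$ I would complexify and use that real $\II$‑invariant subspaces correspond to complex subspaces of $L$: an $\II$‑invariant isotropic $W$ of real dimension $2n$ corresponds to a complex $W_L\subset L$ with $\dim_\C W_L=n$ that is maximal isotropic for the nondegenerate Hermitian form $h(u,v):=\pair{u,\Bar{v}}$ on $L$, with $W$ the real subspace having $W\tens\C=W_L\dsum\Bar{W_L}$. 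One checks that $h$ has split signature $(n,n)$ (as it does for the model structures \eqref{basic gc examples}), that $\ell:=\tau_\C\cap L$ is itself a maximal $h$‑isotropic subspace, and that the intersection requirement translates into $\dim_\C(W_L\cap\ell)=\tfrac12\dim_\R S$. So I must produce a second $h$‑Lagrangian $W_L$ meeting the given $h$‑Lagrangian $\ell$ in the prescribed complex dimension $\tfrac12\dim_\R S$, while staying transverse to $V^*_\C$.

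The hard part is precisely this simultaneous isotropic‑subspace problem. Prescribing $\dim_\C(W_L\cap\ell)$ by itself is routine Witt extension in the split Hermitian space $(L,h)$; the obstacle is achieving the prescribed intersection with $\ell$ \emph{and} the transversality $W\cap V^*=0$ at once, since $V^*_\C$ meets both $\ell$ and its conormal part coming from $N^*S$. I expect to start from an $h$‑isotropic subspace of $\ell$ of complex dimension $\tfrac12\dim_\R S$ chosen to avoid $\ell\cap V^*_\C$, extend it to a maximal $h$‑isotropic $W_L$ by Witt's theorem, and then perturb within the space of such extensions to force transversality to $V^*_\C$ by a dimension count. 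The integer $\tfrac12\dim_\R S$ being available—equivalently, the parity hypothesis—is exactly what keeps these dimension counts consistent, so the condition $\dim_\R S\equiv 0\bmod 2$ is not merely necessary but is the precise numerical input the extension argument needs.
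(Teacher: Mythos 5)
Your reduction of all three bullets to the existence of a single $\II$-invariant isotropic complement $W$ to $V^*$ with $\dim_\R(W\cap\tau)=\dim_\R S$ is sound, and the translation into Hermitian linear algebra on $L$ is correct (including the signature count and the identification $\dim_\R(W\cap\tau)=2\dim_\C(W_L\cap\ell)$, which is a clean way to see why the parity of $\dim_\R S$ is forced by this strategy). But the proof stops exactly where the content begins: you never actually produce $W_L$. ``Extend by Witt's theorem and then perturb within the space of such extensions to force transversality to $V^*_\C$ by a dimension count'' is a plan, not an argument. The two constraints you must satisfy simultaneously---$\dim_\C(W_L\cap\ell)$ equal to $\tfrac12\dim_\R S$ exactly (not merely at least), and $(W_L\dsum\Bar{W_L})\cap V^*_\C=0$---do not live in the same place: the first is a condition on $W_L$ inside $L$, while the second involves the conjugation-invariant but not $\II$-invariant subspace $V^*_\C$, which is not the direct sum of its intersections with $L$ and $\Bar{L}$. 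A genericity or dimension-count argument needs the bad locus to be a proper subvariety of the space of Witt extensions of your seed $A\subset\ell$, and establishing that requires exhibiting at least one good extension---which is the original problem. As written, there is a genuine gap at the crux.

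For comparison, the paper avoids complexification entirely and builds the complement $U=s(V)$ by hand in three $\II$-invariant isotropic pieces: $U_S$, an $\II$-invariant complement to $N^*S+\II(N^*S)$ inside $\tau$; $U_P$, an $\II$-invariant complement to $N^*S$ inside $N^*S+\II(N^*S)$; and $U_N$, an isotropic $\II$-invariant complement to $\tau$ inside $(U_P\dsum U_S)^\perp$. The parity hypothesis is consumed at the $U_P$ step, where one must complement the non-$\II$-invariant subspace $N^*S$ inside the $\II$-invariant space $N^*S+\II(N^*S)$: the basis vectors complementary to $N^*S\cap\II(N^*S)$ come in even number, get paired as $(f_i,g_i)$, and the complement is spanned by $f_i+\II g_i$ and $\II f_i - g_i$. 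Transversality to $V^*$ then falls out of the computation $a\bigl((U_P\dsum U_S)^\perp\bigr)=\Ann\bigl(V^*\cap(U_P\dsum U_S)\bigr)=V$ rather than from a perturbation. To salvage your route you would need an explicit analogue of this pairing construction controlling $W_L\dsum\Bar{W_L}$ against $V^*_\C$; the Witt-plus-genericity shortcut does not substitute for it.
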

\begin{proof}
We will choose an isotropic, $\II$-invariant $U := s(V) \subset \mathbb{V}$ which is compatible with $\tau$ in a certain way.  We will specify $U$ as the direct sum of three parts: $U_N$, where $a(U_N)$ is complementary to $S$, $U_P$, where $a(U_P) = P(N^*S) \subset S$ (recall that branes are isotropic), and $U_S$, where $a(U_S)$ is complementary to $a(U_P)$ in $S$.

Let $W =N^*S + \II\left(N^*S\right)$---an $\II$-invariant subspace of $\tau$.  We let $U_S$ be any $\II$-invariant complement to $W$ in $\tau$ (which exists, by the existence of $\C$-linear vector space complements).

Let $U_P$ be an $\II$-invariant complement to $N^*S$ in $W$.  Even though $N^*S$ is not itself $\II$-invariant, we can see that such a complement exists: let $e_1,\ldots,e_q$ be a basis for ${N^*S \,\cap\, \II\left(N^*S\right)}$ (a complex, and thus $\R$-even-dimensional space).  To get a basis for $W$, we need additional basis elements complementary to ${N^*S \,\cap\, \II\left(N^*S\right)}$.  From the dimension hypothesis we find there are an even number, thus we pair them, as $(f_i,g_i)$, etc.  The $\II f_i$'s and $\II g_i$'s are complementary to $N^*S$ in $W$, and an $\II$-invariant complement is generated by $f_i + \II g_i$, $\II f_i - g_i$, etc.

Since $U_P$ and $U_S$ are contained in $\tau$, $U_P \dsum U_S$ is isotropic; furthermore, since it is complementary to $N^*S = V^* \cap \tau$ in $\tau$, it covers $S$.

Now consider the $\II$-invariant space $\left(U_P \dsum U_S\right)^\perp \subset \mathbb{V}$.
\begin{align}
a\left(\left(U_P \dsum U_S\right)^\perp\right) &= \Ann\left(V^* \cap \left(U_P \dsum U_S\right)\right) \subset V \notag \\
&= \Ann(0 \subset V^*) = V.
\end{align}
$\left(U_P \dsum U_S\right)^\perp$ contains $\tau$, and thus there exist $\II$-invariant complements to $\tau$ in $\left(U_P \dsum U_S\right)^\perp$.  Furthermore, since $\II$ is orthogonal, we may choose such a complement, $U_N$, to be isotropic.  Then,
\begin{align}
V = a\left(\left(U_P \dsum U_S\right)^\perp\right) &= a(U_N \dsum \tau) \notag \\
&= a(U_N \dsum U_P \dsum U_S \dsum N^*S) \notag \\
&= a(U_N \dsum U_P \dsum U_S),
\end{align}
so $U := U_N \dsum U_P \dsum U_S$ is a maximal isotropic, $\II$-invariant subspace covering $V$. 

Because $U$ is $\II$-invariant, the splitting $s : V \isoto U$ gives us $\II$ in the form \eqref{linear hol form}.  Thus it gives us a complex structure, $I = \II|_U$, on $V$, for which $S \iso U_P \dsum U_S$ is a complex subspace.  Furthermore, $\tau = U_P \dsum U_S \dsum N^*S = s(S) \dsum N^*S$.
\end{proof}

\begin{rem}
The parity condition that appears here is different from that in Theorem \ref{gc local} from \cite{bailey2013}. In order to get a local holomorphic Poisson structure there, we required that the rank of the real Poisson structure was $0 \mod 4$. Here, however, we only require that the brane be even-dimensional---the existence of a brane constrains the generalized complex structure to be ``more like'' a holomorphic structure.
\end{rem}

\subsection{The local form theorem}\label{final proof section}

We may now give the final proof of Theorem \ref{brane local form}. Recall:

\begin{thm:main}
If $S$ is an $\R$-even-dimensional, rank-0 generalized complex brane in $(M,\II)$ with generalized tangent bundle $\tau \subset \Tc M|_S$, then any $p \in S$ has a neighbourhood in $M$ on which the data $\II$, $S$ and $\tau$ are equivalent, under a $B$-transform, to a holomorphic Poisson structure $\pi$ for some complex structure $I$, with brane $S$ equal to a complex, coisotropic submanifold with generalized tangent bundle $TS \dsum N^*S$.
\end{thm:main}
\begin{proof}[Proof, part 1: satisfying the ansatz]
\renewcommand{\qedsymbol}{}
First we show that $p$ has a neighbourhood in which the data may be put in the form of the ansatz  of Definition \ref{ansatz def}.
	
We invoke Proposition \ref{linear result} at $p$ to get a complex structure $I_p$ on $T_p M$ and a splitting of $\Tc_p M$ such that $\II_p$ is an $I_p$-complex Poisson structure as in \eqref{linear hol form}.  In particular, $\II_p$ is a deformation of $I_p$ by some bivector $\epsilon_p$.  Since $T_pS$ is $I_p$--invariant, we may choose complex coordinates near $p$ such that $S$ is a complex submanifold. Let $W \iso B_r \subset \C^n$ be a neighbourhood of $p$ which is a polydisc in these coordinates. As per Remark \ref{generalized tangent is locally standard}, we may extend the splitting $\Tc_p M$ along $W \cap S$, compatibly with $\tau$, such that, in the splitting, $\tau \iso TS \dsum N^*S$; then the splitting on $W \cap S$ is extended to $W$ in an arbitrary way.  $\II|_W$ is a deformation of $I$ by some $\epsilon$ extending $\epsilon_p$.  From $I$ we get an isomorphism between $W$ and a polydisc $B_r \subset \C^n$, with $S \cap W \iso \C^k \cap B_r$, and from the splitting we get an isomorphism $\Tc W \iso \Tc B_r$.

This puts us in the setting of the ansatz.  It follows from Main Lemma \ref{main lemma} that, if $\epsilon$ is small enough in a certain sense, then there is a generalized diffeomorphism, $\phi$, from a neighbourhood of $p$ to a neighbourhood of $0 \in \C^n$ such that $\phi_*(\II)$ is holomorphic Poisson, $S$ goes to $\C^k$, and $\tau$ goes to $T\phi(S) \dsum N^*\phi(S)$.  If we pull the complex structure back through $\phi$, then we may drop the diffeomorphism part of $\phi$, and we have a $B$-transform on a neighbourhood of $p$ putting $(\II,S,\tau)$ in the desired ``holomorphic coisotropic form.''
\end{proof}

\begin{proof}[Proof, part 2: smallness]
\renewcommand{\qedsymbol}{}
Is $\epsilon$ small enough for the Main Lemma \ref{main lemma}?  The answer is ``yes, after some restriction and rescaling.'' This is addressed in detail in \cite[Section 7]{bailey2013} (where one should look for details), but under slightly different hypotheses, so we give a brief explanation here.  We have two operations we may use to make $\epsilon$ smaller:

First, we may ``zoom in,'' i.e., dilate a small neighbourhood of $p$ to a ball of radius $1$ in $\C^n$, and discard the rest.  This allows us to take advantage of any vanishing $\epsilon$ may exhibit at $p$.  However, the different components of $\epsilon$, having different co- and contravariant degrees, have different scaling laws.  If $\epsilon_{2,0}$, $\epsilon_{1,1}$ and $\epsilon_{0,2}$ vanish to order $\alpha$, $\beta$ and $\gamma$ respectively at $p$, then zooming by a factor $t > 0$ scales $\|\epsilon_{2,0}\|$, $\|\epsilon_{1,1}\|$ and $\|\epsilon_{0,2}\|$ by $t^{\alpha - 2}$, $t^\beta$ and $t^{\gamma+2}$ respectively (up to a constant).

The second operation is to ``scale'' $\epsilon$ by scaling $T^*M$ in $\Tc M$.  This is not a generalized diffeomorphism, but does take generalized complex structures to generalized complex structures.  For $s > 0$, we define
\begin{align}
\lambda_s : \epsilon_{2,0} + \epsilon_{1,1} + \epsilon_{0,2} \mapsto s\,\epsilon_{2,0} + \epsilon_{1,1} + s^{-1}\epsilon_{0,2}.
\end{align}

In the ansatz, $\epsilon_p$ is complex Poisson for $I_p$.  In other words, $\epsilon_{2,0}$ may not vanish at $p$, but $\epsilon_{1,1}$ and $\epsilon_{0,2}$ vanish to order $1$.  Then if we zoom by factor $t$ and apply the scaling $\lambda_s$ for $s=t^{\frac{5}{2}}$, then $\|\epsilon_{2,0}\|$, $\|\epsilon_{1,1}\|$ and $\|\epsilon_{0,2}\|$ scale by $t^{\frac{1}{2}}$, $t$ and $t^{\frac{1}{2}}$ respectively (up to a constant); thus, for small enough $t$, we have a zoomed, scaled $\epsilon$ as small as we like.
\end{proof}

\begin{proof}[Proof, conclusion]
Taking Part 1 and Part 2 together, $(\II,S,\tau)$ has a holomorphic standard form near $p$, and the fact that $S$ is coisotropic with respect to the holomorphic Poisson structure $\pi = -\tfrac{1}{4}(iP + IP)$ is a straightforward translation of what it means to be a generalized complex brane in the holomorphic Poisson context.
\end{proof}

\subsection{Higher-rank branes} We can augment Theorem \ref{brane local form} to describe situations in which the brane supports a nontrivial generalized holomorphic vector bundle.

\begin{cor}\label{higher rank local form}
Suppose we have a generalized complex brane in $(M,\II)$ determined by the following data: the support $S \subset $ with $\dim_\R S$ even-dimensional, generalized tangent bundle $\ell = \tau \subset \Tc M|_S$ such that $\II(\tau) = \tau$, and complex vector bundle $V \to S$ with $\tau^{1,0}$--connection $\nabla$.

Then any $p \in S$ has a neighbourhood in $M$ on which the brane data are equivalent, up to a $B$-transform, to Example \ref{hol coiso example}, that is, to the data $(S,\tilde{\tau},\tilde{I},\tilde{\nabla})$, where
\begin{itemize}
	\item $\tilde{\II}$ is of holomorphic Poisson for some complex structure $I$,
	\item $\tilde{\tau} = TS \dsum N^*S$,
	\item $\tilde{\nabla}_{X + (1+\pi)\xi}\, v = \bar\del_X v + \nabla^\pi_\xi v$ for $X \in T_{0,1} S$ and $\xi \in N^*_{1,0} S$, where $\nabla^\pi_\cdot$ is a holomorphic Poisson connection on $S$ \cite{fernandes2000}.
\end{itemize}
\end{cor}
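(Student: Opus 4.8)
The plan is to bootstrap from the rank-$0$ result. First I would apply Theorem \ref{brane local form} to the underlying rank-$0$ brane $(S,\tau)$, producing on a neighbourhood of $p$ a generalized diffeomorphism $\Phi$ (a $B$-transform composed with a diffeomorphism) that carries $(\II,S,\tau)$ into holomorphic Poisson form, with $\tilde\II$ of the shape \eqref{gc hol poisson} for some complex structure $I$, with $S$ a complex coisotropic, and $\tilde\tau = TS \dsum N^*S$. Since a generalized diffeomorphism acts by a bundle isomorphism of $\Tc M$ that intertwines the Courant brackets and the two generalized complex structures, it restricts to a Lie algebroid isomorphism $\ell = \tau^{1,0} \isoto \tilde\ell = \tilde\tau^{1,0}$, and so transports the flat $\ell$-connection $\nabla$ to a flat $\tilde\ell$-connection $\tilde\nabla := \Phi_*\nabla$ on (the pushforward of) $V$. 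Flatness is preserved precisely because $\Phi$ is a Lie algebroid isomorphism. It therefore suffices to analyze a flat $\tilde\ell$-connection in the clean model in which $\tilde\ell$ has the explicit form \eqref{coisotropic ell}.

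Next I would decompose $\tilde\ell = T_{0,1}S \dsum (1+\pi)(N^*_{1,0}S)$ and split the connection along the two summands. For $X \in T_{0,1}S$ set $\bar\del_X := \tilde\nabla_X$, and for $\xi \in N^*_{1,0}S$ set $\nabla^\pi_\xi := \tilde\nabla_{(1+\pi)\xi}$, which gives the asserted formula $\tilde\nabla_{X+(1+\pi)\xi}v = \bar\del_X v + \nabla^\pi_\xi v$ by definition. The $\tilde\ell$-connection Leibniz rule, taken against the anchor of $\tilde\ell$, shows that $X \mapsto \bar\del_X$ is an honest $(0,1)$-connection on $V$, while $\xi \mapsto \nabla^\pi_\xi$ is a contravariant (Poisson) connection along the conormal directions in the sense of \cite{fernandes2000}, using that $\pi(N^*_{1,0}S) \subset T_{1,0}S$ because $S$ is coisotropic.

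The heart of the argument is to unpack the flatness relation $\tilde\nabla_{[a,b]} = [\tilde\nabla_a,\tilde\nabla_b]$ across the three bracket pairings. On $T_{0,1}S \times T_{0,1}S$—a genuine Lie subalgebroid since $S$ is complex—flatness yields $(\bar\del^V)^2 = 0$, so by Koszul--Malgrange $V$ acquires a holomorphic structure. The mixed pairing $T_{0,1}S \times (1+\pi)(N^*_{1,0}S)$ records that $\nabla^\pi$ is compatible with this holomorphic structure, i.e.\ that $\nabla^\pi$ is a \emph{holomorphic} Poisson connection; and the pairing $(1+\pi)(N^*_{1,0}S) \times (1+\pi)(N^*_{1,0}S)$ encodes the flatness of $\nabla^\pi$, i.e.\ the holomorphic Poisson module axioms of \cite{Gualtieri2010}. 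Assembling these three pieces identifies $(S,\tilde\tau,I,\tilde\nabla)$ with the data of Example \ref{hol coiso example}.

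I expect the main obstacle to be the explicit bookkeeping of the bracket of $\tilde\ell$ inside the Courant bracket of $\Tc M$: computing $[(1+\pi)\xi,(1+\pi)\eta]$ and the mixed brackets, tracking the anchors and the Schouten-type terms, and checking that they reproduce exactly the defining relations of a holomorphic Poisson connection. This computation is essentially the one already carried out in \cite{Gualtieri2010}; the new point here is only that, having first applied Theorem \ref{brane local form}, we work in the holomorphic model \eqref{coisotropic ell}, so the identification is a direct translation of the flat $\ell$-connection into module data rather than any fresh analytic estimate.
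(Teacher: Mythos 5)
Your proposal is correct and follows essentially the same route as the paper: reduce to the rank-$0$ case via Theorem \ref{brane local form}, transport the flat $\ell$-connection through the resulting Lie algebroid isomorphism, then decompose along $\tilde\ell = T_{0,1}S \dsum (1+\pi)(N^*_{1,0}S)$ and read off the holomorphic structure, its compatibility with $\nabla^\pi$, and the Poisson-module flatness from the three components of $\nabla^2=0$, citing \cite{Gualtieri2010} for the bracket computation. The only cosmetic differences are that you make explicit the transport of $\nabla$ under the generalized diffeomorphism and name Koszul--Malgrange, both of which the paper leaves implicit.
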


\begin{proof}
By dropping $V$, we have a rank-0 brane, and we are in the case of Theorem \ref{brane local form}.  Thus, without loss of generality, we assume that $\II$ is indeed holomorphic Poisson, $S$ is a complex coisotropic submanifold, and $\tau = TS \dsum N^*S$ with $\ell = T_{0,1}S \dsum \Gamma_\pi$, where $\Gamma_\pi := (1+\pi)(N^*_{1,0}S) \iso N^*_{1,0}S$.

We examine the action of $\ell$.  The following argument is essentially in \cite[Proposition 8]{Gualtieri2010}.  Decompose $\nabla$ as $\nabla' + \nabla''$, where $\nabla' : C^\infty(T_{0,1}S \tens V) \to C^\infty(V)$ and $\nabla'' : C^\infty(\Gamma_\pi \tens V) \to C^\infty(V)$.  From the condition $\nabla^2=0$, we see that $\nabla'$ squares to zero and is just an action of $T_{0,1}S$ on $V$, i.e., a holomorphic structure.  Furthermore, $\nabla''$ anticommutes with $\nabla'$, i.e., $\nabla''$ is holomorphic.  Finally, $\nabla''$ is a flat connection for the Lie algebroid $\Gamma_\pi$, which is just a Poisson connection (see \cite{fernandes2000}).
\end{proof}

\section{Example: the Hopf surface}\label{example section}

The standard Hopf surface is a complex surface defined as
\begin{align}
X := (\C^2 \setminus \{0\}) \,/\, \bb{Z},
\end{align}
where $1 \in \bb{Z}$ acts by multiplication by 2. Of course, the standard complex structure on $X$ determines a generalized complex structure. However, in \cite{Gualtieri2014}, and elaborated in \cite{BG2016}, $X$ admits some interesting generalized complex structures (in fact, generalized Kahler structures) which are not deformations of the underlying complex structure---nor do they admit any global holomorphic gauge. We will define one of these structures concretely (omitting its derivation from bi-Hermitian geometry, which is in \cite{BG2016}), and then we will define some branes on it whose existence is not obvious without the ``locally holomorphic'' view.

 Let $(z_1,z_2)$ be the standard coordinates on $\Z^2$, and let $R^2 = z_1 \bar{z}_1 + z_2 \bar{z}_2$. We define a complex, nondegenerate 2-form on $\C^2 \setminus \{0\}$,
 \begin{align}\label{define C}
 C = \frac{1}{R^2} \left( \frac{2x_1}{\bar{x}_2} d\bar{x}_1 \^ d\bar{x}_2 \,+\, dx_1 \^ d\bar{x}_1 \,+\, dx_2 \^ d\bar{x}_2 \right).
 \end{align}
$C$ is defined everywhere except on the special locus
\begin{align*}
D := \{(x_1,x_2) | x_2 \neq 0\},
\end{align*}
which we call the \emph{complex locus}.

Since $C$ is homogenous of degree $0$, it passes through the $\Z$-action to $X$ (away from $\{x_2=0\}$, which we continue to denote as $D \subset X$, hopefully without ambiguity). $C$ transforms $TX \subset \Tc X$ to a complex Dirac structure
\begin{align}
L := e^C\, TX = (1 + C) TX \;\subset\; TX \dsum T^*X,
\end{align}
which extends smoothly to $D$. $C$ is not closed, but
\begin{align}\label{dC}
dC = \frac{1}{R^4}\Big[ (x_2 d\bar{x}_2 - \bar{x}_2 dx_2) dx_1 \^ d\bar{x}_1 \,+\, 
(x_1 d\bar{x}_1 - \bar{x}_1 dx_1) dx_2 \^ d\bar{x}_2 \Big],
\end{align}
is real, so $L$ is an integrable Dirac structure in $T_\C M \dsum T_\C ^*M$ with bracket twisted by $H = -dC$. Indeed, $L$ is the $+i$-eigenbundle of a generalized complex structure, $\II$, on $X$.

$\II$ is of symplectic type (up to $B$-transform) everywhere except $D$, where it is of complex type. Some easy-to-see branes on $X$ are the whole of $X$, $D$, points in the complex locus, and Lagrangians not intersecting $D$. Here we give some other examples:

\subsection{A family of $S^2$ branes on $X$}
Let $c$ be any positive real number, and let
\begin{align}\label{brane equation}
S = \{ (x_1,x_2) \,|\, x_1 \,\textnormal{is real and}\, R^2 = c\}.
\end{align}
Strictly speaking, we allow $R^2 = 4^k c$ for all $k \in \Z$, since this constraint is invariant under the $\Z$-action. $S$ is diffeomorphic to the 2-sphere in the obvious way.

\begin{prop}
There is a unique rank-0 brane supported on $S$.
\end{prop}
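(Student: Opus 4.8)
The plan is to reduce everything to the single computation that $\iota^* C = 0$ along $S$. Once this is in hand, existence and uniqueness both follow quickly, and the only delicate feature—the two points where $S$ meets the complex locus $D$—is handled by continuity. First I would pin down the real Poisson structure: writing $C = B + i\omega$ with $B = \Re C$ and $\omega = \Im C$, the $+i$-eigenbundle is $L = \{\,v + \iota_v C : v \in T_\C X\,\}$, which is the $B$-transform of the symplectic eigenbundle $\{\,v + i\,\iota_v\omega\,\}$; hence off $D$ the structure $\II$ is of symplectic type with real Poisson structure $\pm\,\omega^{-1}$, and coisotropy of $S$ is equivalent to $\iota^*\omega = 0$. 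I would then compute $\iota^* C$ outright. On $S$ we have $x_1 = \bar x_1$, so $dx_1 = d\bar x_1$ and the term $dx_1 \wedge d\bar x_1$ drops; differentiating $x_1\bar x_1 + x_2\bar x_2 = c$ gives $2x_1\,dx_1 = -(\bar x_2\,dx_2 + x_2\,d\bar x_2)$, and substituting this into the surviving singular term $\tfrac{2x_1}{\bar x_2}\,d\bar x_1 \wedge d\bar x_2$ produces exactly $-\,dx_2\wedge d\bar x_2$, cancelling the last term. Thus $\iota^* C = 0$ on $S\setminus D$, hence on $S$. In particular $\iota^*\omega = 0$ (so $S$ is coisotropic, indeed Lagrangian off $D$ as it is middle-dimensional) and $\iota^* H = -\,d\,\iota^* C = 0$, so $F = 0$ is a legitimate Courant trivialization and $\tau := TS\dsum N^*S$ is a candidate generalized tangent bundle.

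Next I would establish existence by showing $\tau = TS\dsum N^*S$ is $\II$-invariant. Away from $D$, $\ell := (\tau\tens\C)\cap L$ consists of the $v + \iota_v C$ with $v \in T_\C S$ and $\iota_v C \in N^*S\tens\C$; but for $v$ tangent to $S$ one has $\iota^*(\iota_v C) = \iota_v(\iota^* C) = 0$, so this normality condition is automatic and $\ell = \{\,v + \iota_v C : v \in T_\C S\,\}$ has complex rank $2$. Since $\ell\subset L$, $\bar\ell\subset\bar L$ and $L\cap\bar L = 0$, the rank-$4$ space $\ell\dsum\bar\ell$ lies inside—hence equals—$\tau\tens\C$, giving $\II\tau = \tau$ on $S\setminus D$. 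The relation $\II\tau = \tau$ is closed and $S\setminus D$ is dense in $S$, so $\tau$ is $\II$-invariant on all of $S$, producing a rank-$0$ brane.

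Finally I would prove uniqueness on the dense open set $S\setminus D$, where $\II$ is of symplectic type up to $B$-transform and $S$ is Lagrangian. By Example \ref{lagrangian example} the generalized tangent bundle of any brane is forced there to be $\II(N^*S)\dsum N^*S$; comparing with the invariant $\tau$ just built—which contains $N^*S$, hence $\II(N^*S)$—identifies this with $TS\dsum N^*S$, i.e. $F = 0$ on $S\setminus D$. A rank-$0$ brane on $S$ is a smooth $2$-form $F$ with $dF = \iota^* H$, and two such agreeing on the dense set $S\setminus D$ agree everywhere; thus $F \equiv 0$ and the brane is unique. The one point demanding care—and the expected obstacle—is the pair $S\cap D = \{(\pm\sqrt c,0)\}$, where $\II$ degenerates from symplectic to complex type and the normal form of Example \ref{lagrangian example} is unavailable. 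The argument above sidesteps this by deriving both $\II$-invariance and rigidity on $S\setminus D$ and propagating them across the two points by continuity, using that $TS\dsum N^*S$ extends smoothly there; one may also check directly that at $(\pm\sqrt c,0)$ the tangent space $T_pS$ is the complex $x_2$-line, so $TS\dsum N^*S$ is manifestly invariant under the complex-type $\II$.
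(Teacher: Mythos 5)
Your proof is correct, but it takes a genuinely different route to existence than the paper does. The paper constructs an explicit local holomorphic gauge: new complex coordinates $w_1 = \log(\bar x_1 R^2/x_1)$, $w_2 = \bar x_2/R$ on $X \setminus Z$ together with a real $B$-field putting $\II$ into holomorphic log-symplectic form, in which $S$ becomes the holomorphic curve $\{w_1 = \log c\}$ and is therefore a holomorphic coisotropic brane as in Example \ref{hol coiso example}; this is deliberately chosen to illustrate the main theorem of the paper. You instead bypass the gauge entirely: the single computation $\iota^* C|_S = 0$ (which checks out --- the constraint $2x_1\,dx_1 = -(\bar x_2\,dx_2 + x_2\,d\bar x_2)$ does cancel the singular term against $dx_2\wedge d\bar x_2$) shows both that $F=0$ is a valid trivialization and that the graph $\{v + \iota_v C : v \in T_\C S\}$ lands in $\tau \tens \C$, whence $\tau\tens\C = \ell \dsum \bar\ell$ is $\II$-invariant off $D$, and continuity handles the two points of $S \cap D$. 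This is more elementary and self-contained, at the cost of not exhibiting the holomorphic coisotropic picture that the example is meant to showcase. For uniqueness both arguments are essentially identical: off $D$ the structure is of symplectic type, $S$ is Lagrangian, and Example \ref{lagrangian example} forces $\tau_S = \II(N^*S)\dsum N^*S$, which then propagates across $S \cap D$ by density. One small caution: $C$ itself is singular along $D$, so statements like ``$\iota^* C = 0$ on $S$'' should be read as statements about the smooth objects $H = -dC$ and the Dirac structure $L = e^C\,TX$, which do extend over $D$; your actual use of them is in this form, so the argument stands.
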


\begin{proof}
We would like to realize $S$ as a holomorphic coistropic submanifold; however, as we said, $\II$ does not admit a global holomorphic gauge. But we can define new \emph{local} complex coordinates
\begin{align}
w_1 = \log\left(\bar{x}_1 R^2 / x_1\right)  \qquad w_2 = \bar{x}_2 / R.
\end{align}
$w_2$ is defined on all of $\C^2 \setminus \{0\}$ and $w_1$ is defined away from the locus $Z := \{(x_1,x_2) | x_1 \neq 0\}$. The $\Z$-action is holomorphic in these coordinates, so they define a complex structure on $\tilde{X} := X \setminus Z$.

We define a $B$-transform on $\tilde{X}$ via the real 2-form
\begin{align}
B = \frac{x_2 \bar{x}_2}{2R^2} d\log\left(\frac{\bar{x}_1}{x_1}\right) d\log\left(\frac{\bar{x}_2}{x_2}\right)
\end{align}
and let $\JJ = e^B \II e^{-B}$. Since $dB = H = -dC$, $\JJ$ is defined on the \emph{untwisted} standard Courant algebroid $T\tilde{X} \dsum T^*\tilde{X}$.

$\JJ$, like $\II$, is given by a complex Dirac structure of the form $e^W\; T\tilde{X}$, where $W = C + B$. A caculation shows that
\begin{align}
W = d w_1 \^ d \log w_2.
\end{align}
This is a holomorpic \emph{log-symplectic structure} \cite{GUILLEMIN2014864}. It inverse to the holomorphic Poisson structure,
\begin{align}
\pi = w_2\, \del_{w_1} \^ \del_{w_2},
\end{align}
and the generalized complex structure $\JJ$ on $\tilde{X}$ is determined . (Details on how these formulas were obtained can be found in \cite[Section 8.4]{BG2016}.)

In these coordinates, the defining equation \eqref{brane equation} of $S$ is just $w_1 = \log c$. Thus, $S$ is a holomorphic coistropic in $\tilde{X}$ intersecting the complex locus $D$. (It is coisotropic because, in these complex coordinates, it is just a curve in a surface.) We define the generalized tangent bundle in this gauge as $\tau_S = TS \dsum N^*S$, as in Example \ref{hol coiso example}. But away from $D$, where $X$ is of symplectic type, $\tau_S$ was already determined completely by the fact that $S$ is Lagrangian, and so $\tau_S = \JJ(N^*S) \dsum N^*S$, as in Example \ref{lagrangian example}. Clearly, these $\tau_S$'s must coincide, but the latter definition extends to $Z$, so we have a generalized tangent bundle for all of $S \subset X$.
\end{proof}

Regarding higher-rank branes: if $\{p\} = S \cap D$, then on $S \setminus \{p\}$, as per Example \ref{lagrangian example}, higher rank branes are just flat vector bundles. \emph{A priori}, we might expect some choice in how the connection extends to $p$, i.e., we would have a \emph{meromorphic connection} with some monodromy around $p$. However, $S \setminus \{p\}$ is simply connected, so higher rank branes will have no monodromy, and will just be trivial vector bundles over $S$.

\section{Connection to quantization}

The A-model of a symplectic (or, hypothetically, generalized complex) manifold, is a kind of ``quantization'' of the manifold. As we said, generalized complex branes are the natural notion of ``brane'' in the (Kapusting-Orlov) A-model. This physics model is presumed to correspond mathematically to the ``Fukaya category'' of the manifold, with branes as (generating) objects and pseudoholomorphic curves stretched between them as (generating) morphisms. A major goal of generalized complex geometry is to define the ``Fukaya category'' of a generalized complex manifold. Presumably, the objects of this category will be generated by generalized complex branes. Thus, we hope this current paper on their local structure, and upcoming work giving the corresponding global perspective, are steps towards understanding this category.

But there is another quantization context in which branes---and the results of this paper---hove more potential to be immediately useful. Branes also appear in \emph{deformation quantization}. Given a Poisson structure on a smooth manifold, Kontsevich constructs a $\star$-product (formal in parameter $\hbar$) \cite{Kontsevich1997} deforming the algebra of functions, for which the Poisson structure is the commutator to first order. He extends this construction to the setting of complex geometry \cite{Kontsevich2001}, where the global quantization data must now be a more general, higher-categorical object than a sheaf of algebras (he calls it a ``stack of algebroids''). However, locally, the quantization goes through just as in the smooth case, and so about any point one has (up to ``inner automorphism'') a $\star$-product on the holomorphic functions.

Though deformation quantization of a generalized complex manifold is still an ongoing project, there is strong evidence that the deformation quantization should be the same sort of thing as the quantization of a holomorphic Poisson manifold \cite{Kontsevich2001}, generalized somewhat to a \emph{Hopf algebroid}. (The ``strong evidence'' is that the deformation theory of a ``stack of algebroids'' is the same as the generalized complex deformation theory; indeed, this is one of the principal ways in which generalized complex geometry naturally arises.) As we said in Section \ref{upcoming paper section} above, the generalized complex manifold should be viewed as a holomorphic \emph{stack}, and therefore in some sense we should be quantizing this stack. ``Derived higher stacks'' are deformation-quantized in \cite{CPTVV2015} in the very general setting of derived algebraic geometry; but in an ongoing project with Gualtieri, we work to make the quantization concrete and explicit as it applies to generalized complex geometry, and to make it more directly comparable to Kontsevich's ``stacks of algebroids'' construction in \cite{Kontsevich2001}.

Having sketched the picture of deformation quantization, we comment on how branes fit into it. In the Poisson context, Poisson modules over coisotropic submanifolds are the semi-classical (i.e., first-order in $\hbar$) limits of $\star$--product bimodules. Because of cohomological obstructions (studied by Cattaneo and Felder \cite{CattaneoFelderRelativeFormality}), not every coisotropic survives deformation quantization to become a global bimodule in a strict sense, but in a weak sense coistropics should be the things that quantize to bimodules.

Then the results of this paper show that, given a suitable local choice of holomorphic gauge realizing a brane as a holomorphic coisotropic, we have a local deformation quantization of the holomorphic functions, for which the brane should (up to possible obstructions) quantize to a bimodule. Fitting this into the global picture awaits a full definition of generalized complex deformation quantization, but this paper (and the upcoming global version) is one of the steps toward establishing this.

Finally, tying these two notions of quantization together, Kapustin \cite{Kapustin2004} suggests (for physics reasons) that the deformation quantization of a holomorphic symplectic manifold (in the sense of \cite{Kontsevich2001} or \cite{NestTsygan}) will in fact turn out to be equivalent to the A-model of the underlying real symplectic structure, allowing one to bypass the difficult issue pseudoholomorphic curves. Parallel to this approach is the recent work of Ben-Bassat, Brav, Bussi and Joyce (in \cite{BBBBJ} and several earlier papers) to build a ``Fukaya category'' for holomorphic symplectic manifolds using the methods of shifted symplectic structures and perverse sheaves, and which once again bypasses the need to work with nontrivial holomorphic curves.

Though our perspective on generalized complex geometry as locally holomorphic or (globally) ``holomorphic on the stack'' is not directly analogous to the holomorphic symplectic case, the examples above suggest that, in coping with the harder aspects of the A-model, there may be something to be gained from treating things in a holomorphic way. This is a direction for future research.

\bibliographystyle{hyperamsplain-nodash}
\bibliography{references}{}

\end{document}